\theoremstyle{remark}
\theoremstyle{definition}
\newcommand{\cO}{\mathcal{O}}
\newcommand{\cE}{\mathcal{E}}
\newcommand{\cF}{\mathcal{F}}
\newcommand{\cM}{\mathcal{M}}
\newcommand{\cB}{\mathcal{B}}
\newcommand{\cL}{\mathcal{L}}
\newcommand{\cJ}{\mathcal{J}}
\newcommand{\cI}{\mathcal{I}}
\newcommand{\cT}{\mathcal{T}}
\newcommand{\cQ}{\mathcal{Q}}
\newcommand{\Prym}{\mathrm{Prym}}
\newcommand{\Sym}{\mathrm{Sym}}
\newcommand{\Pic}{\mathrm{Pic}}
\newcommand{\Aut}{\mathrm{Aut}}
\newcommand{\Spec}{\mathrm{Spec}}
\newcommand{\lra}{\longrightarrow}
\newcommand{\ra}{\rightarrow}
\newcommand{\AAA}{\mathcal{A}}
\newcommand{\ZZ}{\mathbb{Z}}
\newcommand{\G}{{\rm G}}
\newcommand{\QQ}{\mathbb{Q}}
\newcommand{\CC}{\mathbb{C}}
\newcommand{\End}{\mathrm{End}}
\newcommand{\Hom}{\mathrm{Hom}}
\newcommand{\GL}{\mathrm{GL}}
\newcommand{\SL}{\mathrm{SL}}
\newcommand{\Nm}{\mathrm{Nm}}
\newcommand{\im}{\mathrm{im}}
\newcommand{\rk}{\mathrm{rk}}
\newcommand{\coker}{\mathrm{coker}}
\def\map#1{\ \smash{\mathop{\longrightarrow}\limits^{#1}}\ }
\newcommand{\calM}{{\mathcal M}}
\newcommand{\calN}{{\mathcal N}}
\theoremstyle{plain}
\newtheorem{thm}{Theorem}[section]
\newtheorem{lem}[thm]{Lemma}
\newtheorem{prop}[thm]{Proposition}
\newtheorem{cor}[thm]{Corollary}
\newtheorem{rem}[thm]{Remark}
\newtheorem{defi}[thm]{Definition}
\begin{document}

\title[]{Prym varieties of spectral covers}

\begin{abstract}
Given a possibly reducible and non-reduced spectral cover $\pi: X \ra C$ over a smooth projective complex curve $C$ we determine the group of
connected components of the Prym variety $\Prym(X/C)$. As an immediate application we show that the finite group  of $n$-torsion points of the Jacobian of $C$ acts trivially on the cohomology of the twisted $\SL_n$-Higgs moduli space up to the degree which is predicted by topological mirror symmetry. In particular this yields a new proof  of a result of Harder--Narasimhan, showing 
that this finite group acts trivially on the cohomology of the twisted $\SL_n$ stable bundle moduli space.  \end{abstract}

\author{Tam\'as Hausel}

\author{Christian Pauly}

\address{Mathematical Institute \\ University of Oxford  \\ 24-29 St Giles' \\ Oxford OX1 3LB \\ United Kingdom }

\email{hausel@maths.ox.ac.uk}

\address{D\'epartement de Math\'ematiques \\ Universit\'e de Montpellier II - Case Courrier 051 \\ Place Eug\`ene Bataillon \\ 34095 Montpellier Cedex 5 \\ France}

\email{pauly@math.univ-montp2.fr}



\subjclass[2000]{Primary 14K30, 14H40, 14H60}

\maketitle

\section{Introduction}

Recently there has been renewed interest in the topology of the Hitchin fibration. The Hitchin fibration is  an integrable system associated to a complex reductive group $\rm G$ and a smooth complex projective curve $C$. It was introduced by Hitchin \cite{hitchin} in 1987, originating in his study of 
a $2$-dimensional reduction of the Yang-Mills equations. In 2006, Kapustin and Witten \cite{kapustin-witten} highlighted the importance of the Hitchin fibration  for $S$-duality and the Geometric Langlands program. While the work of Ng\^{o}
\cite{ngo2} in 2008 showed that the topology of the Hitchin fibration is responsible for the fundamental lemma in the Langlands program. In Ng\^{o}'s work and later in the work of Frenkel and Witten \cite{FW} a certain symmetry of the Hitchin fibration plays an important role. 

\bigskip

In this paper we focus on the Hitchin fibration for the group ${\rm G} = \SL_n$ and for a line 
bundle $M$ over $C$, i.e.,
the morphism
\begin{equation} \label{Hitchinfibration}
 h:\calM \lra \AAA_n^0 = \bigoplus_{j=2}^n H^0(C, M^j). 
\end{equation}
Here $\calM$ denotes the quasi-projective moduli space of semi-stable Higgs bundles $(E, \phi)$ over $C$ of rank $n$, fixed determinant $\Delta$  and with trace-free Higgs field $\phi \in H^0(C, \End_0(E) \otimes M)$.  In the case of $\SL_n$ the above
mentioned symmetry group of the Hitchin fibration is the Prym variety of a spectral cover. For the topological applications the determination of its group of components is the first step. Ng\^{o} works with integral, that is irreducible and reduced, spectral curves; but it is interesting to extend his results to non-integral curves. For reducible but reduced spectral curves it was achieved by Chaudouard and Laumon \cite{CL}, who proved  the weighted fundamental lemma by generalizing
Ng\^{o}'s results to reduced spectral curves. In this paper we determine the group of connected 
components of the Prym variety for non-reduced spectral curves as well. 
\bigskip

In order to state the main theorem we need to introduce some notation. We associate to any spectral cover 
$\pi: X \ra C$ a finite group $K$ as follows: let $X = \bigcup_{i \in I} X_i$ be its decomposition
into irreducible components $X_i$,  let $X_i^{red}$ be the underlying reduced curve of 
$X_i$, $m_i$ the multiplicity of $X_i^{red}$ in $X_i$ and $\widetilde{X}_i^{red}$ the normalization
of $X_i$. We denote by $\widetilde{\pi}_i : \widetilde{X}_i^{red} \ra C$ the projection onto $C$ and
introduce the finite subgroups 
$$ K_i = \ker \left( \widetilde{\pi}_i^* : \Pic^0(C) \lra \Pic^0(\widetilde{X}_i^{red}) \right)
\subset \Pic^0(C), $$
as well as the subgroups $(K_i)_{m_i} = [m_i]^{-1}(K_i)$, where $[m_i]$ denotes multiplication
by $m_i$ in the Picard variety $\Pic^0(C)$ parameterizing degree $0$ line bundles over $C$. 
Finally, we put
\begin{equation} \label{defK}
K = \bigcap_{i \in I} (K_i)_{m_i} \subset \Pic^0(C). 
\end{equation}
We denote by $C_n$ the multiple curve with trivial nilpotent structure of order $n$ having underlying
reduced curve $C$.

\bigskip

We consider the norm map $\Nm_{X/C} : \Pic^0(X) \ra \Pic^0(C)$ between the connected
components of the identity elements of the Picard schemes of the curves $X$ and $C$ and 
define the Prym variety
$$ \Prym(X/C) := \ker (\Nm_{X/C}). $$
Our main result is the following

\bigskip

\begin{thm}\label{main}
Let $\pi: X \ra C$ be a spectral cover of degree $n \geq 2$. With the  notation above we have the
following results:

\begin{enumerate}
\item The group of connected components $\pi_0(\Prym(X/C))$ of the Prym variety $\Prym(X/C)$ equals
$$\pi_0(\Prym(X/C)) = \widehat{K}, $$
where $\widehat{K} = \Hom(K, \CC^*)$ is the group of characters of $K$.
\item The natural homomorphism from the group of $n$-torsion line bundles $\Pic^0(C)[n]$  to 
$\pi_0(\Prym(X/C))$ given by
$$ \Phi : \Pic^0(C)[n] \lra \pi_0(\Prym(X/C)), \qquad \gamma \mapsto [\pi^* \gamma], $$
where $[\pi^* \gamma]$ denotes the class of $\pi^* \gamma \in \Pic^0(X)$ in $\pi_0(\Prym(X/C))$
is surjective. In particular, we obtain an upper bound for the order 
$$|\pi_0(\Prym(X/C))| \leq n^{2g},$$
where $g$ is the genus of the curve $C$.
\item The map $\Phi$ is an isomorphism if and only if $X$ equals the non-reduced curve $C_n$ with
trivial nilpotent structure of order $n$.
\end{enumerate}

\end{thm}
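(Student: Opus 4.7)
The plan is a direct counting argument based on Parts~(1) and~(2), exploiting the degree constraint $\sum_i m_id_i = n$ coming from $X \to C$ being of degree $n$.

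$(\Leftarrow)$ Suppose $X = C_n$. Then $|I|=1$, $X_1^{red}\cong C$, $m_1=n$, and $\widetilde\pi_1 = \Id_C$, so $K_1=0$ and $K=(K_1)_n=\Pic^0(C)[n]$. By Part~(1), $|\pi_0(\Prym(X/C))|=|\widehat K|=n^{2g}=|\Pic^0(C)[n]|$. The surjection $\Phi$ of Part~(2) between finite groups of the same cardinality is then automatically an isomorphism.

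$(\Rightarrow)$ Conversely, assume $\Phi$ is an isomorphism, so $|K|=n^{2g}$ by Part~(1). The identity $\Nm_{\widetilde\pi_i}\circ\widetilde\pi_i^* = [d_i]$ forces $K_i\subset\Pic^0(C)[d_i]$, hence $(K_i)_{m_i}\subset\Pic^0(C)[m_id_i]$. Setting $\delta:=\gcd_i(m_id_i)$, intersection yields $K\subset\Pic^0(C)[\delta]$, so $n^{2g}=|K|\leq\delta^{2g}$ and $\delta\geq n$. Combined with $\delta\leq m_id_i$ for every $i$ and $\sum_j m_jd_j=n$, this forces $|I|=1$ and $m_1d_1=n$. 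Then from $K=(K_1)_{m_1}$ and $|K|=m_1^{2g}|K_1|=n^{2g}$ we obtain $|K_1|=d_1^{2g}$; since $K_1\subset\Pic^0(C)[d_1]$ and $|\Pic^0(C)[d_1]|=d_1^{2g}$, we conclude $K_1=\Pic^0(C)[d_1]$.

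The main obstacle is the final step: deducing $d_1=1$ from $K_1=\Pic^0(C)[d_1]$. I would argue as follows. Each $L\in\Pic^0(C)[d_1]$ of exact order $k$ determines a connected cyclic \'etale cover $p_L:C_L\to C$ of degree $k$ on which $p_L^*L$ becomes trivial, and the vanishing $\widetilde\pi_1^*L=\cO_{\widetilde X_1^{red}}$ is equivalent to $\widetilde\pi_1$ factoring through $p_L$. Since this holds for every $L\in\Pic^0(C)[d_1]$, the connected morphism $\widetilde\pi_1$ must factor through the maximal \'etale abelian cover of $C$ of exponent dividing $d_1$, whose Galois group is $\Pic^0(C)[d_1]^\vee$ and which therefore has degree $d_1^{2g}$. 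This gives $d_1=\deg\widetilde\pi_1\geq d_1^{2g}$, forcing $d_1=1$ whenever $g\geq 1$. Hence $m_1=n$, $X_1^{red}\cong C$ sits as a section of the total space of $M$, and $X=X_1$ is the $n$-th trivial infinitesimal thickening of $C$, namely $C_n$.
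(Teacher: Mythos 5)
Your proposal proves only Part (3): it explicitly takes Parts (1) and (2) as inputs ("a direct counting argument based on Parts (1) and (2)"). But Parts (1) and (2) are the substance of the theorem, and the paper devotes essentially all of Section 4 to them. Establishing Part (1) requires constructing the partial normalization $p:\widetilde{X}=\bigsqcup_i\widetilde{X}_i\to X$, proving that $p^*:\Pic^0(X)\to\Pic^0(\widetilde{X})$ is surjective with connected kernel (via the structure of Picard groups of non-reduced and singular curves), factoring the norm map through $\prod_i\Pic^0(\widetilde{X}_i^{red})$ using the identities $\Nm_{X/C}(\cL)=\bigotimes_i\Nm_{X_i/C}(\cL_i)$ and $\Nm_{X/C}(\cL)=\Nm_{X^{red}/C}(j^*\cL)^{\otimes m}$, recognizing the resulting map as the dual of $f=\left(\prod_i\widetilde{\pi}_i^*\circ[m_i]\right)\circ\Delta$ under the principal polarizations, and computing $\pi_0(\ker\hat{f})=\widehat{\ker f}=\widehat{K}$; Part (2) then follows from $g\circ f=[n]$ and a duality argument. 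None of this appears in your write-up, so as a proof of the stated theorem it has a major gap: the two parts on which your entire argument rests are unproved.

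Within Part (3) itself, your argument is correct and is essentially the paper's, with useful detail added. The paper likewise deduces $r=1$ from $n=\sum_i m_i d_i$ together with the order of $K$, and reduces to $K_1=\Pic^0(C)[d_1]$, but then simply asserts "this can only happen if $d_1=1$". Your justification of that step --- every $L\in K_1$ forces $\widetilde{\pi}_1$ to factor through the cyclic cover $C_L$ (Birkenhake--Lange, Prop.\ 11.4.3, which the paper itself invokes elsewhere), hence through the maximal abelian cover of exponent $d_1$, of degree $d_1^{2g}>d_1$ --- is a valid way to close it; more directly, $|K_1|$ divides $\deg\widetilde{\pi}_1=d_1$, which is incompatible with $|K_1|=d_1^{2g}$ once $g\geq 1$. (As you note, the claim degenerates for $g=0$; the paper tacitly assumes $g\geq 1$.) So: Part (3) is fine and matches the paper's route; Parts (1) and (2) are missing.
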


Similar descriptions of $\pi_0(\Prym(X/C))$ were given in \cite{ngo1} in the case of integral spectral
curves and by \cite{CL} in the case of reducible but reduced spectral curves. Also \cite{dCHM}
use special cases for $\SL_2$. 

\bigskip

For a characteristic $a \in \AAA_n^0$ we denote by
$\pi : X_a \ra C$ the associated spectral cover of degree $n$ (see section \ref{sectionspectralcovers}) and by $K_a$ the
subgroup of $\Pic^0(C)$ defined in \eqref{defK} and corresponding to the cover $X_a$. 
Let $\Gamma \subset \Pic^0(C)[n]$
be a cyclic subgroup of order $d$ of the finite group $\Pic^0(C)[n]$ of $n$-torsion line bundles over $C$ and 
let  $\AAA^0_\Gamma \subset \AAA^0_n$ denote the
endoscopic sublocus of characteristics $a$ such that the associated degree $n$ spectral cover
$\pi: X_a \ra C$ comes from a degree $\frac{n}{d}$ spectral cover over the \'etale Galois
cover of $C$ with Galois group $\Gamma$ (for the precise definition see section \ref{sectcyclicGalois}). 
With this notation we have the following

\begin{thm}
We have an equivalence
$$ \Gamma \subset K_a \qquad \iff \qquad a \in \AAA^0_\Gamma. $$
\end{thm}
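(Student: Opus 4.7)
The plan is to establish both implications by passing through the étale cyclic Galois cover $p_\Gamma: C_\Gamma \ra C$ associated with $\Gamma$ and analysing the behaviour of the spectral cover $X_a$ under base change along $p_\Gamma$. The starting point is the classical correspondence: a cyclic subgroup $\Gamma \subset \Pic^0(C)[n]$ of order $d$ classifies an étale Galois $\Gamma$-cover $p_\Gamma: C_\Gamma \ra C$ characterised by the property that for $\eta \in \Pic^0(C)$ one has $p_\Gamma^*\eta \cong \cO_{C_\Gamma}$ if and only if $\eta \in \Gamma$, with the analogous statement for subgroups of $\Gamma$ and the corresponding intermediate étale covers of $C$.

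For the implication $a \in \AAA^0_\Gamma \Rightarrow \Gamma \subset K_a$, I would use the definition of the endoscopic locus to produce a degree $n/d$ spectral cover $Y \subset \mathrm{Tot}(p_\Gamma^* M)$ over $C_\Gamma$ whose defining polynomial has norm along $p_\Gamma$ equal to the characteristic $a$. Since $p_\Gamma$ is étale and Galois, the base change $X_a \times_C C_\Gamma$ then decomposes inside $\mathrm{Tot}(p_\Gamma^* M)$ as the disjoint union $\bigsqcup_{g \in \Gamma} g^* Y$. For each irreducible component $X_i$ of $X_a$ with multiplicity $m_i$, I would identify the $\Gamma$-orbit of components of $Y$ that gives rise to $X_i$; the stabiliser $\Gamma_i \subset \Gamma$ of this orbit corresponds to an intermediate étale cover through which $\widetilde{X}_i^{red} \ra C$ factors, and a local multiplicity count relating $m_i$ to the index $[\Gamma:\Gamma_i]$ yields $\widetilde{\pi}_i^*(\gamma^{m_i}) = \cO$ for every $\gamma \in \Gamma$, hence $\Gamma \subset K_a$.

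For the converse $\Gamma \subset K_a \Rightarrow a \in \AAA^0_\Gamma$, the hypothesis $\widetilde{\pi}_i^*(\gamma^{m_i}) = \cO$ for every $\gamma \in \Gamma$ and every component $X_i$ forces each normalised component $\widetilde{X}_i^{red} \ra C$ to factor through an intermediate étale cover of $C$ determined by a subgroup of $\Gamma$. These compatible factorizations assemble into a free $\Gamma$-action on the base change $X_a \times_C C_\Gamma$ lifting the Galois action on $C_\Gamma$, so this base change splits as $\bigsqcup_{g \in \Gamma} g^* Y$ for some degree $n/d$ spectral cover $Y \ra C_\Gamma$ equipped with its inherited nilpotent thickening. Étale descent then identifies the defining polynomial of $X_a$ with the norm of the defining polynomial of $Y$, proving $a \in \AAA^0_\Gamma$.

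The main obstacle will be the exact bookkeeping of multiplicities. The definition of $(K_i)_{m_i}$ as $[m_i]^{-1}(K_i)$ is finely tuned to the scheme-theoretic multiplicity of the orbits of $\Gamma$ acting on $X_a \times_C C_\Gamma$, and verifying the correspondence in both directions will require a careful local computation near the generic point of each component $X_i$; the key leverage is that $p_\Gamma$ is étale, which reduces the problem to an explicit factorization of the defining polynomial of $X_a$ over the function field of $C_\Gamma$ into a product of Galois conjugate factors.
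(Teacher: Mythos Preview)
Your overall strategy coincides with the paper's: base-change the spectral cover along the \'etale Galois cover $\varphi:D\to C$ attached to $\Gamma$ and analyse how $Y_a=X_a\times_C D$ breaks up into Galois translates of a smaller spectral cover. The gap is that your two central structural claims are false precisely in the non-integral cases that carry the content of the theorem.

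The assertion that $X_a\times_C D$ is a \emph{disjoint} union $\bigsqcup_{g\in\Gamma} g^*Y$ already fails whenever $X_a$ is non-reduced. Take $X_a=C_n$, the $n$-th thickening of the zero section: then $X_a\times_C D=D_n$ inside $|N|$, and the would-be Galois translates $g^*Y$ all coincide with the same $(n/d)$-thickening of the zero section. The correct statement (the paper's lemma characterising $\AAA_\Gamma$) is only that $Y_a=\bigcup_{\sigma\in\Gamma} Z^\sigma$ as a sum of Cartier divisors in the surface $|N|$, with no disjointness and no claim that the $Z^\sigma$ are distinct. Consequently your converse argument via a ``free $\Gamma$-action'' on $Y_a$ cannot work: in the example above the stabiliser of $Z$ is all of $\Gamma$, and in general it is a nontrivial subgroup whose index is tied to the multiplicity $m_i$. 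Even for reduced $X_a$ the $Z^\sigma$ are divisors in a surface and typically intersect, so the disjoint-union picture is not available there either.

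The paper avoids this by a case analysis rather than a uniform argument. It first proves the equivalence for integral $X_a$, where the key input is that $\Gamma\subset K_a$ iff the normalised map $\widetilde X_a\to C$ factors through $D$, and one shows the $Z^\sigma$ are the distinct irreducible components of $Y_a$. For irreducible non-reduced $X_a$ with $X_a^{red}=X_{a_{red}}$ of multiplicity $k$, it reduces to the integral case via the subgroup $\Gamma_{red}=[k](\Gamma)$, proving separately that $\Gamma\subset K_a\iff\Gamma_{red}\subset K_{a_{red}}$ and that $a_{red}\in\AAA_{\Gamma_{red}}\iff a\in\AAA_\Gamma$; the latter equivalence is exactly the stabiliser computation that replaces your free-action claim. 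Reducible $X_a$ is then handled by induction on the number of components. The ``bookkeeping of multiplicities'' you flag as the main obstacle is precisely this reduction step, and it does not fit into the disjoint-union framework you set up.
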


\bigskip

This gives a description of the locus of characteristics $a \in \AAA^0_n$ such that the Prym
variety $\Prym(X_a/C)$ is non-connected, because clearly $\AAA^0_{\Gamma_2} \subset
\AAA^0_{\Gamma_1}$ if $\Gamma_1 \subset \Gamma_2$.

\begin{cor}
The sublocus of characteristics $a \in \AAA_n^0$ such that the Prym variety $\Prym(X_a/C)$ is not 
connected equals the union
\begin{equation} \label{aendo}
\AAA_{endo} := \bigcup \AAA^0_\Gamma,
\end{equation}
where $\Gamma$ varies over all cyclic subgroups of prime order of $\Pic^0(C)[n]$.
\end{cor}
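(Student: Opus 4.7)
The plan is to reduce the corollary directly to the equivalence $\Gamma\subset K_a \iff a\in \AAA^0_\Gamma$ supplied by the preceding theorem, using Theorem~\ref{main} to reinterpret non-connectedness. By Theorem~\ref{main}(1), the Prym variety $\Prym(X_a/C)$ fails to be connected precisely when $K_a\neq 0$. By Theorem~\ref{main}(2), the surjection $\Phi:\Pic^0(C)[n]\twoheadrightarrow \widehat{K_a}$ dualises under Pontryagin duality to an injection $K_a\hookrightarrow \widehat{\Pic^0(C)[n]}\cong \Pic^0(C)[n]$, so $K_a$ is a finite subgroup of $\Pic^0(C)[n]$.

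Being a finite abelian group, $K_a$ is non-trivial if and only if it contains a cyclic subgroup $\Gamma$ of prime order: given any non-zero $\gamma\in K_a$ of order $m$ and any prime $p\mid m$, the subgroup $\langle (m/p)\gamma\rangle$ is cyclic of order $p$ and lies in $K_a$. Hence $\Prym(X_a/C)$ is disconnected if and only if there exists a cyclic subgroup $\Gamma\subset\Pic^0(C)[n]$ of prime order with $\Gamma\subset K_a$. Applying the preceding theorem to such $\Gamma$, the condition $\Gamma\subset K_a$ is equivalent to $a\in \AAA^0_\Gamma$, whence the non-connected locus equals $\bigcup \AAA^0_\Gamma$, the union being taken over cyclic subgroups of prime order of $\Pic^0(C)[n]$, i.e.\ exactly $\AAA_{endo}$ as defined in \eqref{aendo}.

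I expect no serious obstacle, since both non-trivial inputs—the computation $\pi_0(\Prym(X_a/C))=\widehat{K_a}$ and the endoscopic characterisation of $\Gamma\subset K_a$—are already in hand. The only conceptual point worth underlining is the containment $K_a\subset \Pic^0(C)[n]$, which is what guarantees that restricting to cyclic subgroups of prime order in the union \eqref{aendo} loses no information; equivalently, by the monotonicity $\AAA^0_{\Gamma_2}\subset \AAA^0_{\Gamma_1}$ whenever $\Gamma_1\subset \Gamma_2$ (noted right after the statement of the preceding theorem), enlarging the family of $\Gamma$'s beyond the prime-order cyclic ones would only shrink the individual loci and thus leave the union unchanged.
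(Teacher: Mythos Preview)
Your proof is correct and follows the same approach as the paper, which does not give a formal proof but merely remarks (just before the corollary) that the result follows from the preceding theorem together with the monotonicity $\AAA^0_{\Gamma_2}\subset\AAA^0_{\Gamma_1}$ for $\Gamma_1\subset\Gamma_2$. Your write-up is a faithful unpacking of this: non-connectedness amounts to $K_a\neq 0$ by Theorem~\ref{main}(1), any non-trivial $K_a\subset\Pic^0(C)[n]$ contains a prime-order cyclic subgroup, and Theorem~1.2 translates this into membership in some $\AAA^0_\Gamma$.
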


Calculating the dimensions of the endoscopic loci $\AAA^0_\Gamma$ will lead to an immediate 
topological application. Recall that $\Pic^0(C)[n]$ acts on $\calM$ by tensorization,
and this will induce an action on the rational cohomology $H^*(\calM;\QQ)$. We then have

\begin{thm}\label{variant} Let $n>1$ and $p_n$ be the smallest prime  divisor of $n$. Assume that $M=K_C$, the
canonical bundle of $C$, and that $(n,\deg(\Delta))=1$. Then the action of $\Pic^0(C)[n]$ on $H^k(\calM;\QQ)$ is trivial, provided that
 $$k\leq 2n^2(1-1/p_n)(g-1).$$ \end{thm}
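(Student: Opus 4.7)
The plan is to combine Theorem~\ref{main} and its Corollary with the Beilinson--Bernstein--Deligne decomposition theorem applied to the proper projective Hitchin map $h:\calM \to \AAA^0_n$. First I would establish triviality on generic fibres: over the complement $U := \AAA^0_n \setminus \AAA_{endo}$, the Prym $\Prym(X_a/C)$ is connected by the Corollary, so the homomorphism $\Phi$ of Theorem~\ref{main}(2) vanishes and each $\gamma \in \Pic^0(C)[n]$ acts on $h^{-1}(a)$ by translation by $\pi^*\gamma$, an element of the connected group $\Prym(X_a/C)$. Since a connected algebraic group acts trivially on rational cohomology, the induced action on $H^*(h^{-1}(a);\QQ)$, and hence on $Rh_*\QQ_\calM|_U$, is trivial. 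Decomposing into $\kappa$-isotypic components for $\kappa \in \widehat{\Pic^0(C)[n]}$ then forces $(Rh_*\QQ_\calM)^\kappa|_U = 0$ for every non-trivial $\kappa$, so every such isotypic summand is supported inside $\AAA_{endo}$.

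Next I would pin down the codimension of the endoscopic loci. For $\Gamma \subset \Pic^0(C)[n]$ cyclic of prime order $d \mid n$, realising $\AAA^0_\Gamma$ through the \'etale $\Gamma$-cover $\widetilde C \to C$ (of genus $\widetilde g = d(g-1)+1$) and the corresponding $\SL_{n/d}$-Hitchin base on $\widetilde C$---subject to the constraint that the pushforward to $C$ have trivial determinant---yields $\mathrm{codim}_{\AAA^0_n} \AAA^0_\Gamma = n^2(1-1/d)(g-1)$. As $d$ ranges over prime divisors of $n$ this codimension is minimised at $d = p_n$, so every irreducible component of $\AAA_{endo}$ has codimension at least $c_0 := n^2(1-1/p_n)(g-1)$ in $\AAA^0_n$.

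Since $(n,\deg\Delta)=1$ the moduli space $\calM$ is smooth and $h$ is proper projective, hence the decomposition theorem writes $Rh_*\QQ_\calM[\dim\calM]$ as a direct sum of shifted simple perverse sheaves $IC_{\overline{Z}_{i,\alpha}}(\cL_{i,\alpha})[-i]$. The $\Pic^0(C)[n]$-action preserves this decomposition, and by the previous steps every IC summand contributing to a non-trivial isotypic piece is supported in some $\overline{\AAA^0_\Gamma} \subseteq \AAA_{endo}$, hence in codimension at least $c_0$. Combining the relative hard Lefschetz theorem with the Poincar\'e-type symmetry of IC cohomology on a codimension-$c$ support then confines the contribution of such summands to $H^k(\calM;\QQ)$ to degrees $k > 2c \geq 2c_0$, so $(H^k(\calM;\QQ))^\kappa = 0$ for every non-trivial $\kappa$ whenever $k \leq 2c_0$, as claimed.

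The main obstacle is this final shift analysis: converting the support bound $\mathrm{codim} \geq c_0$ into the sharp degree bound $k > 2c_0$. It requires careful tracking of the perverse shifts $i$ compatible with each codimension of support, most cleanly achieved via Ng\^o's support theorem for $\delta$-regular abelian fibrations; verifying that the $\SL_n$-Hitchin system fits Ng\^o's framework and extracting the precise factor of $2$ is where the bulk of the technical work lies. The codimension count is itself delicate because of the $\SL_n$-determinant constraint on the pushforward from $\widetilde C$, without which one would obtain only the weaker codimension $n^2(1-1/d)(g-1)+(d-1)(g-1)$ and so a strictly worse vanishing range than the one predicted by topological mirror symmetry.
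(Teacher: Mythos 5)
Your first two steps track the paper closely: the triviality of the $\Pic^0(C)[n]$-action over the non-endoscopic locus (the paper does this with a sheaf-theoretic lemma using proper base change rather than the decomposition theorem, but the content is the same), and the codimension count $\mathrm{codim}\,\AAA^0_\Gamma = n^2(1-1/d)(g-1)$ agrees with the paper's Lemma~6.1. (As an aside, your closing parenthetical is backwards: a larger codimension for the endoscopic locus would give a \emph{better} vanishing range, not a worse one.)

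The genuine gap is the final step, which you yourself flag as "where the bulk of the technical work lies" but do not carry out, and whose claimed conclusion is moreover stronger than what the stated strategy can deliver. The implication "every IC summand in a non-trivial isotypic piece is supported in codimension $\geq c_0$, hence contributes only in degrees $k>2c_0$" is \emph{not} a formal consequence of the decomposition theorem plus relative Hard Lefschetz: a summand $IC_{\overline{Z}}(\cL)$ with $\mathrm{codim}\,\overline{Z}=c$ sitting in perverse degree $i$ contributes to $H^k(\calM;\QQ)$ for $k\geq \dim\AAA^0_n+c+i$, and without a support theorem controlling which $i$ can occur for a given $c$ one only gets $k\geq c$, not $k\geq 2c$. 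Establishing the needed control is precisely the hard input (Ng\^o-type $\delta$-regularity for the $\SL_n$ system over the full base, including non-reduced spectral curves), and even granting it one obtains $k\geq 2c_0$, not the strict inequality $k>2c_0$ needed to cover the boundary degree $k=2c_n$. The paper avoids all of this: for $k\leq 2c_n-1$ it uses only the long exact sequence of the pair $(\calM,\calM_{ne})$, where $\calM_{ne}=h^{-1}(\AAA^0_n\setminus\AAA_{endo})$, together with the fact that $h^{-1}(\AAA_{endo})$ has complex codimension $\geq c_n$ in $\calM$; and for the single remaining degree $k=2c_n$ it combines the perverse filtration estimates of de Cataldo--Hausel--Migliorini (a variant class of degree $i$ vanishing on $h^{-1}$ of a generic $(c_n-1)$-plane has perversity $\leq i-c_n$, forcing $i\geq 2c_n$) with relative Hard Lefschetz applied to an invariant ample class and, crucially, the theorem of Garc\'ia-Prada--Heinloth--Schmitt that there are no variant classes in the middle cohomology $H^{\dim\calM}(\calM;\QQ)$. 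This last external input is indispensable for the degree $k=2c_n$ and appears nowhere in your outline, so as written your argument proves at best the weaker bound $k\leq 2c_n-1$, and only modulo a support theorem you have not verified.
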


In fact this result should be sharp, as the topological mirror symmetry conjecture \cite[Conjecture 5.1]{hausel-thaddeus} predicts that the smallest degree where 
$\Pic^0(C)[n]$ acts non-trivially is $$k=n^2(1-1/p_n)(2g-2)+1.$$ This results hints at the close connection between Ng\^{o}'s strategy in \cite{ngo1,ngo2} for studying the symmetries of the Hitchin fibration and the topological mirror symmetry conjectures in \cite{hausel-thaddeus}. More discussion on this   connection can be found in \cite{hausel-survey}.

Finally let $\calN$ denote the moduli space of stable vector bundles of rank $n$ and fixed determinant $\Delta$ over 
$C$. Again the finite group $\Pic^0(C)[n]$ acts on  $\calN$ by tensorization and thus on $H^*(\calN;\QQ)$. As the cohomology $H^*(\calN;\QQ)$   is a summand in the cohomology of $H^*(\calM;\QQ)$ we can deduce

\begin{cor}\label{hnmain} The finite group $\Pic^0(C)[n]$ acts trivially on $H^*(\calN;\QQ)$. 
\end{cor}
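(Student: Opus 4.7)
The plan is to deduce Corollary \ref{hnmain} from Theorem \ref{variant} by a bootstrap argument combining the equivariant direct-summand property $H^*(\calN;\QQ) \hookrightarrow H^*(\calM;\QQ)$ recalled in the excerpt with Poincar\'e duality on $\calN$. I will work throughout under the hypotheses of Theorem \ref{variant}, namely $M = K_C$ and $(n, \deg \Delta) = 1$, which ensure that both $\calN$ and $\calM$ are smooth.

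First I would verify that the splitting is $\Pic^0(C)[n]$-equivariant. This is essentially tautological: the inclusion $\calN \hookrightarrow \calM$ as the zero section of the Hitchin system, and the $\CC^*$-retraction of $\calM$ onto the nilpotent cone that supplies the retraction of $H^*(\calM;\QQ)$ onto $H^*(\calN;\QQ)$, both commute with tensorisation by a torsion line bundle, since tensoring $(E,\phi)$ by $\gamma \in \Pic^0(C)[n]$ leaves the Higgs field untouched. Applying Theorem \ref{variant} then yields triviality of the $\Pic^0(C)[n]$-action on $H^k(\calN;\QQ)$ for $k \leq 2n^2(1-1/p_n)(g-1)$.

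The remaining step is to propagate this triviality to the top half of $H^*(\calN;\QQ)$. Since $\calN$ is smooth projective of complex dimension $d = (n^2-1)(g-1)$ and $\Pic^0(C)[n]$ acts by biholomorphic automorphisms preserving the fundamental class, Poincar\'e duality is equivariant and intertwines the action in degrees $k$ and $2d-k$. It therefore suffices to show that the triviality range of Theorem \ref{variant} already reaches the middle degree, i.e.\ $d \leq 2n^2(1-1/p_n)(g-1)$. This reduces to $(n^2-1) \leq 2n^2(1-1/p_n)$, equivalently $2n^2/p_n \leq n^2+1$, which holds immediately since $p_n \geq 2$ forces $2n^2/p_n \leq n^2$.

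The main obstacle, such as it is, lies in the gap between the triviality range of Theorem \ref{variant} and the top cohomological degree $2d = 2(n^2-1)(g-1)$ of $\calN$; this gap is precisely what Poincar\'e duality closes, reflecting the fact that $H^*(\calN;\QQ)$ is only half the ``size'' of $H^*(\calM;\QQ)$ in cohomological length. All of the substantive content has been absorbed into Theorem \ref{variant}, so the deduction of the corollary is then a matter of bookkeeping equivariance and a one-line numerical inequality.
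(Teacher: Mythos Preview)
Your proof is correct and follows essentially the same route as the paper's: pass from triviality on $H^k(\calM;\QQ)$ to $H^k(\calN;\QQ)$ via the equivariant summand/surjection, check the numerical inequality $(n^2-1)(g-1) \leq 2n^2(1-1/p_n)(g-1)$ to reach the middle degree, and then invoke Poincar\'e duality. The paper makes the summand step explicit through the Bialynicki--Birula decomposition for the $\CC^\times$-action (your description of this as a ``retraction onto the nilpotent cone giving $H^*(\calN)$'' is slightly imprecise---the nilpotent cone is larger than $\calN$, and it is the open stratum $T^*\calN$ that furnishes the surjection---but since you are taking the summand property as given and correctly observe that tensorisation commutes with the $\CC^\times$-action, the argument goes through).
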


This was the  main application of Harder--Narasimhan in \cite[Theorem 1]{harder-narasimhan}.  Our proof here can be considered as an example of both  the abelianization philosophy of Atiyah--Hitchin \cite[\S 6.3]{atiyah}  and Ng\^{o}'s strategy \cite{ngo1,ngo2} of studying the symmetries of the Hitchin fibration.

\bigskip

The paper is organized as follows. In sections 2 and 3 we recall basic results on spectral covers
and on the norm map $\Nm_{X/C}$. In sections 4 and 5 we prove the two main theorems. In section 6 we 
describe the action of the Prym variety $\Prym(X_a/C)$ on the 
fiber over $a \in \AAA^0_n$ of the $\SL_n$-Hitchin
fibration. Finally in section 7 we apply the results in this paper to
prove Theorem~\ref{variant} and its Corollary~\ref{hnmain}.

\bigskip

{\em Notation:} Given a sheaf $\cF$ over a scheme $X$ and a subset $U \subset X$ we denote by $\cF(U)$ or 
by $\Gamma(U,\cF)$ the space of sections of $\cF$ over $U$.

\bigskip

{\em Acknowledgements:} During the preparation of this paper the first author was supported by a Royal Society University Research Fellowship 
while the second author was supported by a Marie Curie
Intra-European Fellowship (PIEF-GA-2009-235098) from the European Commission. We would like to thank Jean-Marc Dr\'ezet,  Nigel Hitchin, M.S. Narasimhan and Daniel Schaub for helpful discussions.

\section{Preliminaries}

\subsection{Two lemmas on abelian varieties}

Given an abelian variety $A$ and a positive integer $n$ we denote by $[n]: A \rightarrow A$ the 
multiplication by $n$, by $A[n] = \ker [n]$ its subgroup of $n$-torsion points and by 
$\hat{A} = \Pic^0(A)$ its dual abelian variety. We consider 
$$  f : A \longrightarrow B $$
a homomorphism between abelian varieties with kernel $K = \ker(f)$ which we 
assume to be finite. We let 
$\hat{f} : \hat{B} \ra \hat{A}$ denote the dual map induced by $f$. We introduce 
the quotient abelian variety $A' = A/K$, so that we can 
write the homomorphism $f$ as a composite map
$$ f = j \circ \mu : A \map{\mu} A' \map{j} B,$$
where $\mu$ is an isogeny with kernel $K$ and $j$ is injective.

\begin{lem} \label{mapf}
The group of connected components of the abelian subvariety $\ker(\hat{f}) \subset \hat{B}$ equals
$$ \pi_0( \ker(\hat{f})) = \widehat{K}, $$
where $\widehat{K} = \Hom(K, \CC^*)$ is the group of characters of $K$.
\end{lem}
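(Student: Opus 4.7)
The plan is to exploit the factorization $f = j\circ\mu$ already written down in the statement and then dualize. First I would take duals of the two pieces separately: $\hat f = \hat\mu\circ\hat j \colon \hat B\to \hat{A'}\to \hat A$, so that
$$\ker(\hat f) = \hat j^{-1}\bigl(\ker(\hat\mu)\bigr).$$

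Next I would identify each of the two duals. Since $\mu\colon A\to A'$ is an isogeny with finite kernel $K$, standard duality for isogenies (the kernels of an isogeny and of its dual are Cartier/Pontryagin dual as finite group schemes, which over $\CC$ gives $\Hom(K,\CC^*)=\widehat K$) implies $\hat\mu\colon\hat{A'}\to\hat A$ is again an isogeny with $\ker(\hat\mu)\cong\widehat K$. Since $j\colon A'\hookrightarrow B$ is a closed immersion of abelian varieties, dualizing the short exact sequence $0\to A'\to B\to B/A'\to 0$ gives a short exact sequence of abelian varieties
$$0\lra \widehat{B/A'}\lra \hat B\map{\hat j}\hat{A'}\lra 0,$$
in particular $\hat j$ is surjective with connected kernel $\widehat{B/A'}$.

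Combining these, because $\hat j$ is surjective its restriction to $\ker(\hat f)=\hat j^{-1}(\ker(\hat\mu))$ still surjects onto $\ker(\hat\mu)=\widehat K$, with kernel equal to $\ker(\hat j)=\widehat{B/A'}$. This yields the short exact sequence
$$0\lra \widehat{B/A'}\lra \ker(\hat f)\lra \widehat K\lra 0.$$
Since the left-hand term is an abelian variety (connected), it lies entirely in the identity component of $\ker(\hat f)$ and the quotient identifies the component group: $\pi_0(\ker(\hat f))=\widehat K$.

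The only real subtlety is the identification $\ker(\hat\mu)\cong\widehat K$, which is the main obstacle if one wants a fully canonical isomorphism; everything else is bookkeeping with the dual exact sequence. I would simply invoke the standard fact from Mumford's \emph{Abelian Varieties} that the dual isogeny of an isogeny with kernel $K$ has kernel the Cartier dual $\widehat K$, which over $\CC$ is the character group $\Hom(K,\CC^*)$.
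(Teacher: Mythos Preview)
Your proof is correct and follows essentially the same route as the paper: both dualize the factorization $f=j\circ\mu$, identify $\ker(\hat\mu)\cong\widehat K$ via the standard duality for isogenies (the paper cites \cite{BL} Proposition 2.4.3), and use that $\hat j$ has connected kernel (the paper cites \cite{BL} Proposition 2.4.2, which you re-derive by dualizing $0\to A'\to B\to B/A'\to 0$). Your write-up is slightly more explicit in assembling the short exact sequence $0\to\widehat{B/A'}\to\ker(\hat f)\to\widehat K\to 0$, but the argument is the same.
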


\begin{proof}
We consider the dual map
$$\hat{f} : \hat{B} \map{\hat{j}} \hat{A'} \map{\hat{\mu}} \hat{A},$$
and observe that $\hat{\mu} : \hat{A'} \ra \hat{A}$ is an isogeny with kernel $\widehat{K}$
(see e.g. \cite{BL} Proposition 2.4.3) and $\hat{j}$ has connected fibers (see e.g.
\cite{BL} Proposition 2.4.2). The lemma then follows. 
\end{proof}

We also suppose that $A$ and $B$ are principally polarized abelian varieties, i.e. the 
polarizations induce isomorphisms $A \cong \hat{A}$ and $B \cong \hat{B}$. 

\begin{lem} \label{lemmatwo}
We assume that there exists a homomorphism $g : B \ra A$ such that $g \circ f = [n]$ for some integer $n$. 
Then the dual of the canonical
inclusion $i : K \hookrightarrow A[n]$ is a surjective map 
$$ \hat{i} : A[n] = \hat{A}[n] \longrightarrow \widehat{K},$$
which coincides with the restriction to $A[n]$ of the composite map
$\hat{j} \circ \hat{g} : A \ra B \ra \hat{A'}$.
\end{lem}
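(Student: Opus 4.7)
The plan is to realize $\hat i$ as the map on kernels in a morphism of short exact sequences and then dualize. First the quick observations: since $g \circ f = [n]_A$ and $K = \ker(f)$, every $x \in K$ satisfies $[n](x) = g(f(x)) = 0$, so $K \subset A[n]$; surjectivity of $\hat i$ is then formal, as dualizing an injection of finite abelian groups gives a surjection.

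For the main identification, I would exploit the factorization $f = j \circ \mu$. Surjectivity of $\mu$ combined with $g \circ f = [n]_A$ forces $h := g \circ j : A' \to A$ to satisfy $h \circ \mu = [n]_A$, yielding a morphism of short exact sequences of commutative group schemes
$$
\begin{array}{ccccccccc}
0 & \lra & K & \lra & A & \stackrel{\mu}{\lra} & A' & \lra & 0 \\
& & \downarrow i & & \| & & \downarrow h & & \\
0 & \lra & A[n] & \lra & A & \stackrel{[n]_A}{\lra} & A & \lra & 0.
\end{array}
$$
Applying $\Ext^1(-,\mathbb{G}_m)$ (which in characteristic zero turns each row into a short exact sequence of duals) produces
$$
\begin{array}{ccccccccc}
0 & \lra & \widehat{A[n]} & \lra & \hat A & \stackrel{[n]_{\hat A}}{\lra} & \hat A & \lra & 0 \\
& & \downarrow \hat i & & \downarrow \hat h & & \| & & \\
0 & \lra & \widehat K & \lra & \hat{A'} & \stackrel{\hat\mu}{\lra} & \hat A & \lra & 0,
\end{array}
$$
where $\hat h = \widehat{g \circ j} = \hat j \circ \hat g$, and the horizontal injections canonically identify $\widehat{A[n]}$ with $\ker[n]_{\hat A} = \hat A[n]$ and $\widehat K$ with $\ker\hat\mu$ (cf.\ the proof of Lemma~\ref{mapf}). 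Commutativity of the left square then reads $\hat i = \hat h|_{\hat A[n]}$ under these identifications; transporting via the principal polarizations $A \cong \hat A$ and $B \cong \hat B$ gives $\hat i = (\hat j \circ \hat g)|_{A[n]}$, as asserted.

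The main obstacle is less a hard step than a bookkeeping one: the input from duality requires that applying $\Ext^1(-,\mathbb{G}_m)$ to a short exact sequence $0 \to F \to A \to B \to 0$ (with $F$ a finite commutative group scheme and $A, B$ abelian varieties) yields $0 \to \widehat F \to \hat B \to \hat A \to 0$, with $\widehat F$ the Cartier dual of $F$ embedded as $\ker(\hat B \to \hat A)$. This is classical (using $\Ext^1(F,\mathbb{G}_m) = 0$ in characteristic zero), but the cascade of canonical identifications — principal polarizations on $A$ and $B$, the equality $\widehat{A[n]} = \hat A[n]$, and the embedding $\widehat K = \ker\hat\mu$ — demands careful conventions. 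Once these are fixed, the lemma follows from the functoriality of dualization applied to the morphism of short exact sequences above.
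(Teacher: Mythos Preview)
Your proof is correct and follows essentially the same approach as the paper: both dualize the factorization $[n]_A = (g\circ j)\circ\mu$ to obtain $\hat\mu\circ(\hat j\circ\hat g)=[n]$, and then read off that $\hat j\circ\hat g$ restricted to $A[n]$ lands in $\ker\hat\mu=\widehat K$ and coincides with $\hat i$. The paper compresses your morphism-of-short-exact-sequences argument into one line, simply asserting that the resulting surjection $A[n]\to\widehat K$ is dual to $i$ ``since $\widehat{[n]}=[n]$'', whereas you make this functoriality explicit via $\Ext^1(-,\mathbb{G}_m)$.
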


\begin{proof}
It suffices to observe that the isogeny $\hat{f} \circ \hat{g} = \widehat{[n]} = [n]$ factorizes as
$$ [n] : A \map{\hat{j} \circ \hat{g}} \hat{A'} \map{\hat{\mu}} A, $$
that $\widehat{K} = \ker( \hat{\mu} )$, and that $\hat{j} \circ \hat{g}$ is surjective.
Hence a canonical surjection $A[n] \ra \widehat{K}$, which is dual to the inclusion
$i: K \hookrightarrow A[n]$, since $\widehat{[n]} = [n]$.
\end{proof}

\bigskip

\subsection{Spectral covers} \label{sectionspectralcovers}

In this section we review some elementary facts on spectral covers.

\bigskip

Let $C$ be a complex smooth projective curve and let $M$ be a line bundle over $C$ with $\deg M > 0$. We denote
by $|M|$ the total space of $M$ and by 
$$ \pi : |M| \lra C $$
the projection onto $C$. There is a canonical coordinate $t \in H^0(|M|, \pi^*M)$ on the total space
$|M|$. The direct image decomposes as follows
$$ \pi_* \cO_{|M|} = \Sym^\bullet(M^{-1}) = \bigoplus_{i=0}^\infty M^{-i}. $$

\begin{defi}
A spectral cover $X$ of degree $n$ over the curve $C$ and associated to the line bundle $M$ is the
zero divisor in $|M|$ of a non-zero section $s \in H^0(|M|, \pi^* M^{n})$.
\end{defi}

Since a spectral cover $X$ is a subscheme of $|M|$, it is naturally equipped with a projection onto $C$,
which we also denote by $\pi$. The decomposition of  the section $s$ according to the direct sum
\begin{eqnarray*}
H^0(|M|, \pi^* M^{n}) & =  & H^0(C, M^n \otimes \bigoplus_{i=0}^\infty M^{-i}) \ \ \ \ \text{(projection formula)} \\
& = & H^0(C, M^n) \oplus \cdots \oplus H^0(C, M) \oplus H^0(C, \cO_C)
\end{eqnarray*}
gives an expression
$s = s_0 + ts_{1} + \cdots + t^{n-1} s_{n-1} + t^n s_n$ with $s_j \in H^0(C, M^{n-j})$. Here we
also denote by $s_j$ its pull-back to $|M|$. We note that there is an isomorphism
$\pi^* : \Pic(C) \stackrel{\sim}{\longrightarrow} \Pic(|M|)$, hence any line bundle over $|M|$ is of the
form $\pi^* L$ for some line bundle $L \in \Pic(C)$. More generally, we have a decomposition
$H^0(|M|, \pi^* L) = H^0(C, L) \oplus H^0(C, LM^{-1}) \oplus \cdots \oplus H^0(C,LM^{-d})$ for 
some integer $d$ and any
section $s \in H^0(|M|, \pi^* L)$  can be written in the form
\begin{equation}  \label{decequation}
s = s_0 + ts_{1} + \cdots + t^{d-1} s_{d-1} + t^d s_d, \qquad s_j \in H^0(C, LM^{-j}).
\end{equation}

\begin{lem} \label{irrcomspecur}
Let $\pi : X \ra C$ be a spectral cover. Then the underlying
reduced curve of each irreducible component of $X$ is again a spectral cover associated to the line
bundle $M$. 
\end{lem}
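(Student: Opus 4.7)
The plan is to identify $X_i^{red}$ as a Cartier divisor on the smooth surface $|M|$, extract the line bundle it defines, and then show that this line bundle is $\pi^* M^d$, where $d$ is the degree of the induced projection $X_i^{red}\to C$. The starting point is that $\Pic(|M|) = \pi^*\Pic(C)$ (because $|M|\to C$ is an affine bundle), so $\cO_{|M|}(X_i^{red}) = \pi^* L$ for some $L\in\Pic(C)$. A defining section $s'\in H^0(|M|,\pi^*L)$ can be expanded via \eqref{decequation} as
$$ s' = \sum_{j=0}^{d} t^j \sigma_j, \qquad \sigma_j \in H^0(C, L M^{-j}), \quad \sigma_d\neq 0,$$
so the desired conclusion $L\cong M^d$ amounts to showing that the leading coefficient $\sigma_d$ is a nowhere-vanishing section of $LM^{-d}$.

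To establish this, I would first observe that $X_i^{red}\to C$ is finite and surjective: finiteness follows from the fact that $X\to C$ is finite, so no irreducible component of $X$ can be contained in a fiber of $\pi$; surjectivity then follows from irreducibility of $X_i^{red}$ together with properness of the finite map. Next, since $X_i^{red}$ is a Cartier divisor in the smooth surface $|M|$, it is Cohen--Macaulay. The miracle flatness theorem applied to the finite surjective map $X_i^{red}\to C$ from a Cohen--Macaulay scheme to the regular curve $C$ then yields flatness, which together with connectedness of $C$ forces every scheme-theoretic fiber of $X_i^{red}\to C$ to have the same length $d$.

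Finally, the scheme-theoretic fiber of $X_i^{red}$ over $c\in C$ is the zero locus in $\pi^{-1}(c)\cong\mathbb{A}^1$ of the polynomial $\sum_{j=0}^d\sigma_j(c)\,t^j$, whose length equals its degree in $t$. Constant length $d$ across all fibers therefore forces $\sigma_d(c)\neq 0$ for every $c\in C$. Hence $\sigma_d$ trivializes $LM^{-d}$, giving $L\cong M^d$, and the section $s'$ becomes a section of $\pi^*M^d$, exhibiting $X_i^{red}$ as a spectral cover of degree $d$ associated to $M$. The key technical point is precisely the non-vanishing of $\sigma_d$; the miracle flatness input is what makes this step work, and everything else is standard bookkeeping with line bundles on the affine bundle $|M|$.
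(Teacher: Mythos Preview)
Your argument is correct and complete, but it takes a different route from the paper's proof. The paper works with the global factorisation of the defining section: writing $s = s^{(1)} \cdot s^{(2)}$ with $s^{(i)} \in H^0(|M|, \pi^*L_i)$ and expanding each factor as in \eqref{decequation} with top term $t^{n_i} s_{n_i}^{(i)}$, the product of the top coefficients equals the coefficient $s_n$ of $t^n$ in $s$, which for a spectral cover lies in $H^0(C,\cO_C)$ and is a nonzero constant. Hence each $s_{n_i}^{(i)}$ is nowhere-vanishing and $L_i \cong M^{n_i}$. Iterating this on the complete factorisation of $s$ handles every irreducible component at once.

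Your approach instead isolates a single component $X_i^{red}$ and deduces non-vanishing of its leading coefficient from flatness of $X_i^{red}\to C$, obtained via miracle flatness. Both arguments ultimately rest on the same hypothesis (monicity of $s$, equivalently finiteness of $X\to C$), but they use it differently: the paper exploits it algebraically through the factorisation of the top coefficient, while you exploit it geometrically to bootstrap finiteness of $X_i^{red}\to C$ up to flatness. The paper's argument is lighter---pure polynomial bookkeeping, no commutative algebra input---whereas yours is more intrinsic to the component and would adapt more readily to situations where one is handed a reduced irreducible curve in $|M|$ that is finite over $C$ without reference to an ambient monic section.
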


\begin{proof}
It suffices to show that if the section $s \in H^0(|M|, \pi^* M^n)$ decomposes as
$s = s^{(1)} \cdot s^{(2)}$ with $s^{(i)} \in H^0(|M|, \pi^* L_i)$ for $i=1,2$ and
$L_1 L_2 = M^n$, then $L_i = M^{n_i}$ and $n_1 + n_2 = n$. By \eqref{decequation} the
section $s^{(i)}$ can be written as 
\begin{equation} \label{decequation1}
s^{(i)} = s^{(i)}_0 + ts^{(i)}_{1} + \cdots + t^{n_i} s^{(i)}_{n_i} ,
\end{equation}
with $s_j^{(i)} \in  H^0(C, L_i M^{-j})$ and $s_{n_i}^{(i)}
\not= 0$. Moreover $n_i = \deg (X^{(i)}/C)$ with $X^{(i)} = \mathrm{Zeros}(s^{(i)})$. By considering the
highest order terms of \eqref{decequation1} we obtain the relations $n_1 + n_2 = n$ and 
$s^{(1)}_{n_1} \cdot s^{(2)}_{n_2} = s_n \in H^0(C, \cO_C)$. Since $s_n$ is a non-zero 
constant section, we conclude that $L_i = M^{n_i}$. 
\end{proof}

We introduce the $\SL_n$- and $\GL_n$-Hitchin space for the line bundle $M$ over the curve $C$
$$\AAA^0_n(C,M) = \bigoplus_{j=2}^n H^0(C,M^j) \ \ \ \text{and} \ \ \ \AAA_n(C,M) = \bigoplus_{j=1}^n H^0(C,M^j). $$
If no confusion arises, we simply denote these vector spaces by $\AAA^0_n$ and $\AAA_n$. Note that
$\AAA^0_n \subset \AAA_n$.
Given an element $a = (a_1, \ldots , a_n) \in \AAA_n$ with $a_j \in H^0(C, M^j)$, called a
characteristic, we associate to $a$ a spectral cover of degree $n$
$$ \pi_a : X_a \lra C, \qquad X_a = \mathrm{Zeros}(s_a) \subset |M|, $$
with
\begin{equation} \label{polynomiala}
s_a = t^n + a_1 t^{n-1} + \cdots a_{n-1} t + a_n \in H^0(|M|, \pi^* M^n).
\end{equation}

\begin{rem} \label{translationa1}
{\em Given $a \in \AAA_n$ we observe that the pull-back of the spectral cover $X_a \subset |M|$ by
the automorphism of $|M|$ given by translation with the section $- \frac{a_1}{n}$, i.e. $(x,y) 
\mapsto (x, y - \frac{1}{n}a_1(x))$, equals the spectral cover $X_{a'}$ for some $a' \in \AAA^0_n$;
equivalently do the change of variables $t \mapsto t - \frac{a_1}{n}$. Hence $X_a \cong X_{a'}$. It
therefore suffices to restrict our study to spectral covers $X_a$ for $a \in \AAA_n^0$. }
\end{rem}

\bigskip




\subsection{Non-reduced curves}

Let $X$ be an irreducible curve contained in a smooth surface and let $X^{red}$ denote its underlying
reduced curve. Then there exists a global section $s$ of a line bundle such that $X^{red} = \mathrm{Zeros}(s)$ and
an integer $k$ such that $X = \mathrm{Zeros}(s^k)$. We introduce the 
subschemes $X_i = \mathrm{Zeros}(s^i)$ for $i = 1, \ldots, k$,
so that we have a filtration of $X$ by closed subschemes
$$ X^{red} = X_1 \subset X_2 \subset \cdots \subset X_k = X. $$
In that case we say that $X$ has a nilpotent structure of order $k$. For any integer $i$ we denote by
$\cO_{X_i}$ the structure sheaf of the subscheme $X_i \subset X$. Note that $\cO_{X_i}$ is naturally
a $\cO_X$-module.

\bigskip

We need to recall a result on the local structure of coherent sheaves on non-reduced curves.

\begin{thm}[\cite{D} Th\'eor\`eme 3.4.1] \label{strthmnonreduced}
Let $X$ be a curve with nilpotent structure of order $k$ and let $\cE$ be a coherent sheaf over $X$. Then
there exists an open subset $V \subset X$ depending on $\cE$  and integers $m_i$ such that
$$ \cE_{|V} \stackrel{\sim}\lra  \bigoplus_{i= 1}^k {\cO_{X_i}^{\oplus m_i}}_{|V}.$$
The sheaf on the right is called a quasi-free sheaf.
\end{thm}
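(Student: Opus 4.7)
Since the theorem is Zariski-local on $X$, my plan is to work in an open neighbourhood of a generic point of each irreducible component of $X^{red}$. In the smooth ambient surface one can choose coordinates $(x,y)$ with $X^{red}=\{y=0\}$ and $X=\{y^k=0\}$; hence $\cO_X$ becomes (the sheaf associated to) $R:=A[y]/(y^k)$, where $A$ is the coordinate ring of an affine open in $X^{red}$, the subscheme $X_i$ corresponds to $R/(y^i)$, and $\cE$ corresponds to a finitely generated $R$-module $M$. The theorem is then reduced to showing that after inverting a suitable nonzero $a\in A$, there is a decomposition $M\cong\bigoplus_{i=1}^{k}(R/(y^i))^{m_i}$.

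The key tool is to solve the problem at the generic fibre and then spread out. Let $K=\mathrm{Frac}(A)$; then $M\otimes_A K$ is a finitely generated module over the Artinian principal ideal ring $K[y]/(y^k)$, and the structure theorem for modules over a PIR gives a decomposition
\[
M\otimes_A K\ \cong\ \bigoplus_{i=1}^{k}\bigl(K[y]/(y^i)\bigr)^{m_i},
\]
with the multiplicities $m_i$ determined by the $K$-dimensions of the graded pieces of the $y$-adic filtration on $M\otimes_A K$. Lifting the cyclic generators of this decomposition to elements $e_{i,\alpha}\in M$ produces an $R$-linear map
\[
\Psi\colon\bigoplus_{i=1}^{k}\bigl(R/(y^i)\bigr)^{m_i}\lra M
\]
which becomes an isomorphism after tensoring with $K$ over $A$. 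Both $\ker\Psi$ and $\coker\Psi$ are finitely generated $R$-modules that vanish at the generic point, so they are $A$-torsion; by coherence and the fact that $A$ has Krull dimension $1$, their supports are finite. Inverting any nonzero $a\in A$ in the joint annihilator of $\ker\Psi$ and $\coker\Psi$ kills both modules, yielding a basic affine open $V\subset X^{red}$ on which $\Psi|_V$ is the desired quasi-free decomposition.

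The main conceptual obstacle is the spreading-out step: one must ensure that the relations $y^i e_{i,\alpha}=0$, which by construction hold only generically, actually hold on an open subset of $\Spec A$. This is guaranteed by coherence of $M$ (so that the syzygy modules encoding these relations are themselves finitely generated) together with the one-dimensionality of $\Spec A$, which forces any proper closed locus to consist of finitely many points that can be removed by inverting a single element. Globalising is then immediate: apply the local construction at each of the finitely many generic points of the irreducible components of $X^{red}$ and intersect the resulting opens to obtain a dense open subset $V\subset X$ on which $\cE$ is quasi-free.
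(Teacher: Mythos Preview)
The paper does not give its own proof of this statement; it is quoted verbatim from Dr\'ezet \cite{D}, Th\'eor\`eme~3.4.1, and simply used as a black box later on (in the proof of Lemma~3.9). So there is nothing in the paper to compare your argument against.

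Your argument itself is essentially correct and is the standard way to prove such a generic structure result: reduce to the Artinian principal ideal ring $K[y]/(y^k)$ at the generic point, invoke the elementary-divisor decomposition there, lift generators, and spread out using that $A$ is one-dimensional. Two small points are worth tightening. First, you write down $\Psi:\bigoplus_i (R/(y^i))^{m_i}\to M$ before you have arranged $y^i e_{i,\alpha}=0$; strictly speaking $\Psi$ is not defined until you have already shrunk. It is cleaner to first invert an $a\in A$ annihilating the $A$-torsion submodule of $M$ (which contains every $y^i e_{i,\alpha}$, since these vanish in $M\otimes_A K$), so that the map is honestly defined, and only then argue about $\ker\Psi$ and $\coker\Psi$. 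You do say exactly this in your final paragraph, so the logic is there; only the order of exposition is slightly misleading. Second, your choice of coordinates $(x,y)$ with $X^{red}=\{y=0\}$ tacitly assumes $X^{red}$ is smooth at the point in question; since $X^{red}$ is a reduced curve over $\mathbb{C}$ this holds on a dense open, but it is worth saying that you first shrink to the smooth locus of $X^{red}$ before picking coordinates. In the paper's setting $X$ is already assumed irreducible, so your last remark about intersecting opens over several components is unnecessary (though harmless).
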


\bigskip

\section{The norm map}

In this section we recall the definition of the norm map and prove some of its properties. The
standard references are \cite{EGA2} section 6.5 and \cite{EGA4} section 21.5.

\subsection{Definition} \label{defnorm}

Let $C$ be a smooth projective curve and let 
$$ \pi : X \longrightarrow C $$
be any finite degree $n$ covering of $C$. The $\cO_C$-algebra $\pi_* \cO_C$ will be denoted $\cB$ and the
group of invertible  elements in $\cB$ by $\cB^*$. Note that $\cB$ is a locally free sheaf of rank $n$. 
Let $U \subset C$ be an open subset and let $s \in \Gamma(U, \cB) = \Gamma( \pi^{-1}(U) , \cO_X)$ be a
local section. One defines (\cite{EGA2} section 6.5.1)
$$ N_{X/C} (s) := \det (\mu_s) \in \Gamma(U, \cO_C) $$
where $\mu_s : \cB_{|U} \ra  \cB_{|U}$ is the multiplication with the section $s$. Moreover $s$ is
invertible in $\Gamma(U, \cB)$ if and only if $N_{X/C} (s)$ is invertible in $\Gamma(U, \cO_X)$. 
We have the following obvious relations
\begin{equation} \label{propnorm}
 N_{X/C}(s \cdot s') = N_{X/C}(s) \cdot N_{X/C}(s'), \qquad N_{X/C}( \lambda s ) = \lambda^n 
N_{X/C}(s)
\end{equation}
for any local sections $s$ and $s'$ of $\cB$ and any local section $\lambda$ of $\cO_C$.

\bigskip

Let $\cL$ be an invertible $\cB$-module. We can choose a covering $\{ U_\lambda \}_{\lambda \in \Lambda}$
of $C$ by open subsets  and trivializations $\eta_\lambda : \cL_{| U_\lambda } \map{\sim}  
\cB_{| U_\lambda }$. Then $( \omega_{\lambda,\mu} )_{\lambda,\mu \in \Lambda}$ with 
$$ \omega_{\lambda,\mu}  = \eta_\lambda \circ \eta^{-1}_{\mu|U_\lambda \cap U_\mu} \in \Gamma(U_\lambda
\cap U_\mu, \cB)$$
is a $1$-cocycle with values in $\cB^*$ and $(N_{X/C}(\omega_{\lambda, \mu}))_{\lambda,\mu \in \Lambda}$
is a $1$-cocycle with values in $\cO_C^*$, the sheaf of invertible elements of $\cO_C$. This 
$1$-cocycle determines an invertible sheaf over $C$, which we denote by $\Nm_{X/C}(\cL)$. The
following properties easily follow from \eqref{propnorm}
\begin{equation} \label{Nmpullback}
 \Nm_{X/C}(\cL \otimes \cL') = \Nm_{X/C}(\cL) \otimes \Nm_{X/C}(\cL'), \qquad
  \Nm_{X/C}(\pi^* \cM) = \cM^{\otimes n}, 
\end{equation}
for any two invertible sheaves $\cL$ and $\cL'$ over $X$ and for any invertible sheaf $\cM$ over $C$.
We therefore obtain a group homomorphism between the Picard
groups of the curves $X$ and $C$ called the norm map
$$\Nm_{X/C} : \Pic(X) \lra \Pic(C), \qquad \cL \mapsto \Nm_{X/C}(\cL). $$

\bigskip

\subsection{Properties}

In the case $X$ is smooth, the norm map $\Nm_{X/C}$ has a more explicit description in terms of divisors 
associated to line bundles.

\begin{prop}[\cite{EGA4} section 21.5]
Assume that $X$ is a smooth curve. The norm map, as defined above, coincides with the map
$$ \cL = \cO_X( \sum_{i \in I} n_i p_i ) \mapsto  \Nm_{X/C} (\cL) = \cO_C ( \sum_{i \in I} n_i \pi(p_i) ), $$
where $n_i \in \ZZ$ and $p_i \in X$. Note that this map is well-defined, i.e.  $\Nm_{X/C} (\cL)$
only depends on the linear equivalence class of the divisor  $\sum_{i \in I} n_i p_i$.
\end{prop}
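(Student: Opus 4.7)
The plan is to reduce the statement to a single prime divisor and carry out a local computation. First, since $X$ is a smooth projective curve, every invertible sheaf is of the form $\cO_X(D)$ for a Weil divisor $D = \sum_i n_i p_i$, and by the multiplicativity of $\Nm_{X/C}$ recorded in \eqref{Nmpullback}, it suffices to verify the identity for $\cL = \cO_X(p)$ with $p$ a single closed point; the general case then follows by linearity in $\Pic(X)$. Likewise, since the two candidate maps
$$ \cO_X(p) \mapsto \Nm_{X/C}(\cO_X(p)) \qquad \text{and} \qquad \cO_X(p) \mapsto \cO_C(\pi(p)) $$
are both homomorphisms out of $\Div(X) \to \Pic(C)$ (the former by construction in section \ref{defnorm}, the latter obviously), it is enough to match them on prime divisors.

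Next, I would fix a closed point $p \in X$, set $c = \pi(p) \in C$, and write $\pi^{-1}(c) = \{p = p_1, p_2, \ldots, p_k\}$ with ramification indices $e_1, \ldots, e_k$ satisfying $\sum_j e_j = n$. Choose an open neighborhood $U$ of $c$ small enough that the only zero or pole of a chosen meromorphic section $s$ of $\cO_X(p)$ over $\pi^{-1}(U)$ is a simple zero at $p$; such $s$ trivializes $\cO_X(p)$ away from $p$. On $U$, $\cB = \pi_* \cO_X$ is locally free of rank $n$, and after passing to completions one has
$$ \widehat{\cB}_c \;=\; \bigoplus_{j=1}^k \widehat{\cO}_{X,p_j} \;=\; \bigoplus_{j=1}^k \CC[[u_j]], $$
where each $u_j$ is a uniformizer at $p_j$ and $t = u_j^{e_j} \cdot (\text{unit})$ for a uniformizer $t$ at $c$. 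The section $s$ corresponds to the tuple $(u_1, 1, \ldots, 1)$ up to units.

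The central computation is then $\det(\mu_s)$. On the first summand $\CC[[u_1]]$, viewed as a free $\CC[[t]]$-module with basis $1, u_1, \ldots, u_1^{e_1 - 1}$, multiplication by $u_1$ acts by a companion-type matrix whose only nonzero entries are $1$'s on the subdiagonal and a single $t$ in the upper-right corner; its determinant is $\pm t$, hence a uniformizer at $c$. On the other summands $\mu_s$ is the identity and contributes a unit. Therefore $N_{X/C}(s)$ is a local generator of $\cO_C(-\pi(p))$ on $U$, which together with its invertibility away from $c$ yields $\Nm_{X/C}(\cO_X(p)) = \cO_C(\pi(p))$ as required.

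The main obstacle I anticipate is the ramified case of the local computation above: one must verify that although $s$ vanishes to order $1$ at the (possibly ramified) point $p$, its norm vanishes to order exactly $1$ (not $e_p$) at $\pi(p)$, and the companion-matrix argument is the cleanest way to see this. Well-definedness of the divisor formula with respect to linear equivalence is then automatic, since $\Nm_{X/C}$ was constructed on $\Pic(X)$ from the outset and the displayed assignment has been shown to compute it.
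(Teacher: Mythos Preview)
The paper does not give its own proof of this proposition; it simply records the statement with a reference to \cite{EGA4}, section 21.5. Your argument is a correct direct verification along the standard lines: reduce to a single point by multiplicativity \eqref{Nmpullback}, then compute the norm of a local equation for $p$ via the companion-matrix description of multiplication by a uniformizer on $\widehat{\cO}_{X,p}$ as a free $\widehat{\cO}_{C,\pi(p)}$-module.

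One expository remark: the object you call $s$ is more cleanly described as a regular function on $\pi^{-1}(U)$ with divisor $(s)=p$ --- equivalently, the transition function for $\cO_X(p)$ between the trivialization by $1$ away from $p$ and a local trivialization near $p$ --- rather than as a ``meromorphic section of $\cO_X(p)$ with a simple zero at $p$''. The latter phrase is ambiguous: a section of $\cO_X(p)$ vanishing to order one at $p$ corresponds, under the inclusion $\cO_X(p)\subset K(X)$, to a function that is regular and nonzero at $p$, not to $(u_1,1,\dots,1)$. Once $s$ is understood as the function $(u_1,1,\dots,1)$ in $\widehat{\cB}_c$, your companion-matrix computation is exactly right, and your handling of the ramified case --- that $\det(\mu_{u_1})$ has order $1$ at $c$, not order $e_1$ --- is indeed the key point.
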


\bigskip

From now on the curve $X$ is again an arbitrary cover of $C$.

\begin{lem} 
Let $0 \ra \cE \ra \cF \ra \cT \ra 0$ be an exact sequence of $\cO_X$-modules. We assume that
$\cE$ and $\cF$ are torsion-free and that $\cT$ is a torsion sheaf. Let $\varphi_\bullet$ be a local
morphism over $\pi^{-1}(U)$ for some open subset $U \subset C$ between exact sequences
\begin{equation}  \label{morvarphi}
\begin{array}{ccccccc}
0 \lra & \cE & \lra & \cF & \lra & \cT & \lra 0 \\
       & \ \ \downarrow^{\varphi_\cE} & & \  \ \downarrow^{\varphi_\cF} & & \ \ \downarrow^{\varphi_\cT} &  \\
 0 \lra & \cE & \lra & \cF & \lra & \cT & \lra 0.
\end{array}
\end{equation}
We consider the $\cO_C$-linear maps
induced by $\varphi_\cE$ and $\varphi_\cF$ in the direct image sheaves
$\pi_* \cE$ and $\pi_* \cF$.
Then we have the equality
$$\det (\varphi_\cE) = \det (\varphi_\cF)  \in \Gamma(U, \cO_U).$$
\end{lem}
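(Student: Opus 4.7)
The plan is to prove the equality by reducing to the generic point $\eta$ of $C$. First, I would verify that the determinants of $\varphi_\cE$ and $\varphi_\cF$ make sense as elements of $\Gamma(U, \cO_U)$: since $\cE$ and $\cF$ are torsion-free on the Cohen-Macaulay curve $X$ and the finite morphism $\pi$ is flat (miracle flatness, $X$ sitting as a divisor in the smooth surface $|M|$), the direct images $\pi_*\cE$ and $\pi_*\cF$ are torsion-free coherent $\cO_C$-modules, hence locally free on the smooth curve $C$. Their generic ranks coincide because $\cT$ is a torsion $\cO_X$-module, so $\pi_*\cT$ is supported on finitely many points of $C$ and its stalk at $\eta$ vanishes.

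Next I would localize the direct image of diagram \eqref{morvarphi} at $\eta$. The vanishing of $(\pi_*\cT)_\eta$ turns the inclusion $(\pi_*\cE)_\eta \hookrightarrow (\pi_*\cF)_\eta$ into an isomorphism of $N$-dimensional vector spaces over the function field $K(C)$, and the commutativity of the diagram identifies $(\varphi_\cE)_\eta$ with $(\varphi_\cF)_\eta$ under this isomorphism. Consequently the two endomorphisms have the same determinant as an element of $K(C)$.

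Finally, since $C$ is integral, for any nonempty open $U \subset C$ the restriction map $\Gamma(U, \cO_U) \hookrightarrow K(C)$ is injective, so the generic identity forces the global equality $\det(\varphi_\cE) = \det(\varphi_\cF)$ in $\Gamma(U, \cO_U)$. The main obstacle is the preliminary local freeness step, which relies on flatness of $\pi$ over the smooth curve $C$; once that is in place the remaining argument is essentially tautological, as the torsion contribution from $\cT$ disappears at the generic point and the two endomorphisms become manifestly equal. An equivalent route would be to pick bases locally on $U$ and write the inclusion as a matrix $A$ satisfying $M_\cF A = A M_\cE$; then $\det(A)$ is a non-zerodivisor (its vanishing locus is exactly $\mathrm{Supp}(\pi_*\cT)$), and cancelling it in the integral ring $\Gamma(U,\cO_U)$ yields the claim.
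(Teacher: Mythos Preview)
Your proof is correct and takes essentially the same approach as the paper's: both pass to the function field $K(C)$ (the paper via localization at each closed point $p \in U$ followed by tensoring with $\mathrm{Frac}(\cO_{C,p})$, you directly at the generic point), where the torsion contribution from $\cT$ vanishes and the two endomorphisms become conjugate, hence have equal determinant. The injectivity of $\Gamma(U,\cO_U) \hookrightarrow K(C)$ then transfers the equality back, exactly as in the paper; your explicit justification of local freeness of $\pi_*\cE$ and $\pi_*\cF$ via flatness of $\pi$ is a point the paper leaves implicit.
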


\begin{proof}
It is enough to show that the two local sections $\det (\varphi_\cE)$ and $\det (\varphi_\cF)$ coincide
in the local rings $\cO_{C,p}$ for every point $p \in U$. We put $A = \cO_{C,p}$ and $K = Fr(A)$ and denote by 
$E$, $F$ and $T$ the corresponding $A$-modules of sheaves $\cE$, $\cF$ and $\cT$. Then $E$ and $F$ are free
$A$-modules, hence we have injections $E \hookrightarrow E \otimes_A K$ and 
$F \hookrightarrow F \otimes_A K$. Since $T$ is a torsion module, we have $T \otimes_A K = 0$. Then after 
localizing \eqref{morvarphi} at $p \in C$ and taking tensor product with $K$, we obtain the commutative
diagram
$$
\begin{CD}
E \otimes_A K @>\cong>> F \otimes_A K \\
@VV \varphi_E \otimes id V   @VV \varphi_F \otimes id V \\
E \otimes_A K @>\cong>> F \otimes_A K, 
\end{CD}
$$
where the horizontal maps are isomorphisms. So $\varphi_E \otimes id$ and $\varphi_F \otimes id$ are 
conjugate, hence $\det (\varphi_E \otimes id) = \det (\varphi_F \otimes id) \in K$. On the other hand
$\det (\varphi_E \otimes id)$ and $\det (\varphi_F \otimes id)$ are elements in $A \subset K$, hence we obtain the desired
equality.
\end{proof}

In the sequel we will use the following properties of the norm map:

\bigskip

\begin{cor} \label{equalitydet}
Let $\cE$ and $\cF$ be two torsion-free $\cO_X$-modules such that
$$ 0 \lra \cE \lra \cF \lra \cT \lra 0, $$
where $\cT$ is a torsion $\cO_X$-module. Let $s \in \Gamma(U, \cB) = \Gamma(\pi^{-1}(U), \cO_X)$
be a local section of $\cB$ over the open subset $U \subset C$. We consider the maps
induced by the multiplication with the section $s$ in the direct image sheaves
$\pi_* \cE$ and $\pi_* \cF$, which we denote by $\mu_s^\cE \in \Hom_{\cO_C(U)}(\pi_* \cE(U), \pi_* \cE(U))$
and  $\mu_s^\cF \in \Hom_{\cO_C(U)}(\pi_* \cF(U), \pi_* \cF(U))$. Then we have the equality
$$ \det(\mu_s^\cE ) = \det(\mu_s^\cF ) \in \Gamma( U, \cO_C). $$
\end{cor}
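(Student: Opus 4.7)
The plan is to deduce this corollary as an immediate specialization of the preceding lemma, taking the vertical morphism $\varphi_\bullet$ of that lemma to be multiplication by the local section $s \in \Gamma(\pi^{-1}(U), \cO_X)$.

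First I would verify that multiplication by $s$ actually defines a morphism of the short exact sequence $0 \lra \cE \lra \cF \lra \cT \lra 0$ to itself over $\pi^{-1}(U)$. Since $s$ is a section of the structure sheaf $\cO_X$ and every arrow in the sequence is by hypothesis $\cO_X$-linear, the three endomorphisms $\mu_s^\cE$, $\mu_s^\cF$ and $\mu_s^\cT$ automatically commute with the inclusion $\cE \hookrightarrow \cF$ and the surjection $\cF \twoheadrightarrow \cT$. This is the tautological fact that multiplication by a section of $\cO_X$ commutes with any $\cO_X$-linear map, and it places us precisely in the hypotheses of the preceding lemma.

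Next I would identify the $\cO_C$-linear endomorphisms induced by $\mu_s^\cE$ and $\mu_s^\cF$ on the direct images $\pi_* \cE$ and $\pi_* \cF$ with the endomorphisms of the same name appearing in the statement of the corollary. (Both $\pi_* \cE$ and $\pi_* \cF$ are torsion-free $\cO_C$-modules on the smooth curve $C$, hence locally free, so their determinants make sense as sections of $\cO_C$ over $U$.) The preceding lemma, applied to $\varphi_\cE = \mu_s^\cE$ and $\varphi_\cF = \mu_s^\cF$, then yields directly the desired equality $\det(\mu_s^\cE) = \det(\mu_s^\cF) \in \Gamma(U, \cO_C)$.

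There is no genuine obstacle here: the substantive work has already been carried out in the preceding lemma, whose proof handled the technical point by localizing at each $p \in U$ and tensoring with the fraction field $K = \mathrm{Fr}(\cO_{C,p})$ to kill the torsion module and thereby reduce the determinants to conjugate endomorphisms of free modules. The corollary is then just an instantiation of that lemma in the particular case of scalar multiplication.
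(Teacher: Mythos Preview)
Your proposal is correct and matches the paper's intent exactly: the statement is labeled a corollary precisely because it is the immediate specialization of the preceding lemma to the case $\varphi_\bullet = \mu_s$, and the paper accordingly gives no separate proof.
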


\begin{lem} \label{Nmpullbackp}
Let $p : \widetilde{X} \ra X$ be a covering such that the cokernel of the canonical inclusion
$\cO_X \hookrightarrow p_*  \cO_{\widetilde{X}}$ is a torsion $\cO_X$-module. Then, for any
invertible sheaf $\cL$ over $X$ we have
$$   \Nm_{\widetilde{X}/C} (p^* \cL)  = \Nm_{X/C} (\cL). $$ 
\end{lem}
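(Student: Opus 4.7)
The plan is to reduce the lemma directly to Corollary~\ref{equalitydet} applied to the short exact sequence of $\cO_X$-modules $0 \to \cO_X \to p_*\cO_{\widetilde X} \to \cT' \to 0$ provided by the hypothesis (where $\cT'$ is the given torsion cokernel). First I would set $\tilde\pi = \pi\circ p:\widetilde X \to C$ and introduce the $\cO_C$-algebras $\cB = \pi_*\cO_X$ and $\widetilde\cB = \tilde\pi_*\cO_{\widetilde X} = \pi_*(p_*\cO_{\widetilde X})$. Pushing the above sequence forward by the finite morphism $\pi$ yields an exact sequence $0 \to \cB \to \widetilde\cB \to \pi_*\cT' \to 0$ of $\cO_C$-modules whose last term is still torsion.

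Next I would unwind the \v{C}ech description of the norm map. Choose an open cover $\{U_\lambda\}$ of $C$ fine enough that $\cL$ is trivial as an $\cO_X$-module on each $\pi^{-1}(U_\lambda)$, via isomorphisms $\eta_\lambda : \cL|_{\pi^{-1}(U_\lambda)} \stackrel{\sim}{\to} \cO_X|_{\pi^{-1}(U_\lambda)}$. The transition functions form a cocycle $\omega_{\lambda\mu} \in \Gamma(U_\lambda \cap U_\mu, \cB) = \Gamma(\pi^{-1}(U_\lambda\cap U_\mu), \cO_X)$ representing $\cL$ as invertible $\cB$-module. Pulling back via $p$ gives trivializations $p^*\eta_\lambda$ of $p^*\cL$ on $\tilde\pi^{-1}(U_\lambda)$ whose associated cocycle is precisely the image of $\omega_{\lambda\mu}$ under the inclusion $\cB \hookrightarrow \widetilde\cB$. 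Hence $\Nm_{X/C}(\cL)$ is encoded by the cocycle $\det \mu^\cB_{\omega_{\lambda\mu}}$ while $\Nm_{\widetilde X/C}(p^*\cL)$ is encoded by $\det\mu^{\widetilde\cB}_{\omega_{\lambda\mu}}$.

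The key step is then a single invocation of Corollary~\ref{equalitydet} with $\cE = \cO_X$, $\cF = p_*\cO_{\widetilde X}$, $\cT = \cT'$ and $s = \omega_{\lambda\mu}$: the corollary delivers the identity $\det\mu^\cB_{\omega_{\lambda\mu}} = \det\mu^{\widetilde\cB}_{\omega_{\lambda\mu}}$ in $\Gamma(U_\lambda \cap U_\mu, \cO_C)$, so the two defining cocycles coincide and consequently $\Nm_{X/C}(\cL) = \Nm_{\widetilde X/C}(p^*\cL)$. The main obstacle I anticipate is verifying the torsion-freeness assumption of Corollary~\ref{equalitydet} for $\cF = p_*\cO_{\widetilde X}$: torsion-freeness of $\cO_X$ over itself is automatic, but for $p_*\cO_{\widetilde X}$ one has to check that nonzero-divisors of $\cO_X$ remain nonzero-divisors in $\cO_{\widetilde X}$ after pullback, which is a flatness/Cohen--Macaulay check that has to be dispatched for the specific coverings (normalizations of reduced irreducible components of spectral curves) arising in the subsequent applications.
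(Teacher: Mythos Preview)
Your proposal is correct and follows essentially the same route as the paper: set up the exact sequence $0\to\cO_X\to p_*\cO_{\widetilde X}\to\cT\to 0$, describe $\cL$ and $p^*\cL$ by the same \v{C}ech cocycle $\omega_{\lambda\mu}$ under the inclusion $\cB\hookrightarrow\widetilde\cB$, and invoke Corollary~\ref{equalitydet} once to identify the two norm cocycles.

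One remark on the obstacle you flag. In this paper ``torsion-free'' means pure of dimension one (no subsheaf with zero-dimensional support), not the commutative-algebra condition that nonzero-divisors act injectively; this is consistent with Simpson's usage and with Definition~\ref{definitionsemistability}. With that reading, the torsion-freeness of $p_*\cO_{\widetilde X}$ is immediate: $\widetilde X$ is a curve and $p$ is finite, so $p_*\cO_{\widetilde X}$ has no sections supported at finitely many points. The paper records this as a one-line observation (``Note that the direct image $p_*\cO_{\widetilde X}$ is torsion-free''), and that is all that is needed for the proof of Corollary~\ref{equalitydet}, whose argument only uses that $\pi_*\cE$ and $\pi_*\cF$ are locally free $\cO_C$-modules. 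So there is no flatness or Cohen--Macaulay check to perform, and the lemma holds at the stated level of generality rather than only for the specific coverings arising later.
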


\begin{proof}
We consider the exact sequence
\begin{equation} \label{exactseqtorsion}
  0 \lra \cO_X \lra p_* \cO_{\widetilde{X}} \lra \cT \lra 0, 
\end{equation}
where $\cT$ is a torsion $\cO_X$-module. Note that the direct image
$ p_* \cO_{\widetilde{X}} $ is torsion-free. We denote the $\cO_C$-algebra
$\pi_* p_* \cO_{\widetilde{X}}$ by $\widetilde{\cB}$. Note that $\widetilde{\cB}$ is
a $\cB$-module. Let $\cL$ be an invertible $\cO_X$-module, $\eta_\lambda : \cL_{|U_\lambda}
\map{\sim}  \cB_{|U_\lambda}$ be a set of trivializations of $\cL$ as 
$\cB$-module, and $( \omega_{\lambda,\mu} )_{\lambda,\mu \in \Lambda}$ be the
corresponding $1$-cocycle with values in $\cB^*$. Then the pull-back $p^* \cL$
corresponds to a $1$-cocycle  $( p^* \omega_{\lambda,\mu} )_{\lambda,\mu \in \Lambda}$ 
with values in $\widetilde{\cB}^*$ obtained from $( \omega_{\lambda,\mu} )_{\lambda,\mu \in \Lambda}$
under the canonical inclusion $\cB \hookrightarrow \widetilde{\cB}$. We now apply Corollary 
\ref{equalitydet} to the exact sequence \eqref{exactseqtorsion} and conclude that
$N_{\widetilde{X}/C} (p^* \omega_{\lambda,\mu} )  = N_{X/C} (\omega_{\lambda,\mu}) \in
\Gamma( U_\lambda \cap U_\mu , \cO_C)$. This proves the lemma.
\end{proof}

\begin{lem} \label{Nmred}
Let $X = \bigcup_{i=1}^r X_i$ be the decomposition of $X$ into irreducible components $X_i$. For an
invertible sheaf $\cL$, we denote by $\cL_i = \cL \otimes_{\cO_X} \cO_{X_i}$ its restriction to $X_i$.
Then, we have the equality
$$ \Nm_{X/C}(\cL) = \bigotimes_{i=1}^r \Nm_{X_i/C}(\cL_i). $$
\end{lem}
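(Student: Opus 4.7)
The plan is to deduce this from Lemma \ref{Nmpullbackp} applied to the natural finite morphism
$$ p : \widetilde{X} := \bigsqcup_{i=1}^r X_i \lra X $$
from the disjoint union of the irreducible components of $X$, where each component maps in via its closed immersion $\iota_i : X_i \hookrightarrow X$. We then have $p_*\cO_{\widetilde{X}} = \bigoplus_{i=1}^r (\iota_i)_* \cO_{X_i}$, and the canonical morphism $\cO_X \to p_*\cO_{\widetilde{X}}$ is the product of the component restrictions $f \mapsto (f|_{X_i})_i$. Since $\widetilde{X}$ is a disjoint union over $C$, the algebra $\pi_*\cO_{\widetilde{X}} = \bigoplus_i \cB_i$ splits as $\cO_C$-algebras, multiplication by any local section acts block-diagonally, and $p^*\cL$ restricts to $\cL_i$ on each $X_i$. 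Unwinding the cocycle definition of the norm therefore gives the decomposition
$$ \Nm_{\widetilde{X}/C}(p^*\cL) = \bigotimes_{i=1}^r \Nm_{X_i/C}(\cL_i). $$

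It then suffices to check the hypothesis of Lemma \ref{Nmpullbackp}, namely that $\cO_X \to p_*\cO_{\widetilde{X}}$ is injective with torsion cokernel as an $\cO_X$-module, for then that lemma yields $\Nm_{\widetilde{X}/C}(p^*\cL) = \Nm_{X/C}(\cL)$ and the two identities together give the claim. For injectivity, note that the local rings of the smooth surface $|M|$ are UFDs by Auslander--Buchsbaum, and locally the defining section $s$ of $X$ factors as $s = \prod_{i=1}^r s^{(i)}$ with the $s^{(i)}$ cutting out the distinct irreducible components $X_i$ and hence pairwise coprime (cf.\ Lemma \ref{irrcomspecur}); therefore $\bigcap_i (s^{(i)}) = (s)$ in each local ring, which translates to $\bigcap_i I_i = 0$ in $\cO_X$ for the component ideals $I_i = \ker\bigl(\cO_X \to (\iota_i)_*\cO_{X_i}\bigr)$. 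For the cokernel, the Chinese Remainder Theorem gives an isomorphism at every point of $X$ lying on just one component, so the cokernel is supported on the finite set of points where two or more components meet; it is hence a coherent $\cO_C$-module of finite length, annihilated by a non-zero local parameter on $C$, which pulls back via the finite flat map $\pi$ to a non-zero divisor in $\cO_X$.

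The main (mild) obstacle is this last injectivity/torsion verification, and it boils down to unique factorization in the local rings of the ambient smooth surface $|M|$; with this in hand the rest of the argument is purely formal, combining Lemma \ref{Nmpullbackp} with the block-diagonal splitting of the norm on the disjoint union $\widetilde{X}$.
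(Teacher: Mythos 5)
Your proposal is correct and is essentially the paper's own proof, which simply applies Lemma \ref{Nmpullbackp} to the covering $p : \widetilde{X} = \bigsqcup_{i=1}^r X_i \ra X$ given by the disjoint union of the irreducible components. The only difference is that you also spell out the verification (via unique factorization in the local rings of the smooth surface $|M|$) that $\cO_X \hookrightarrow p_*\cO_{\widetilde{X}}$ is injective with torsion cokernel, a hypothesis the paper leaves implicit; this check is sound and welcome.
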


\begin{proof}
We apply the previous lemma to the covering $p : \widetilde{X} = \bigsqcup_{i=1}^r X_i \ra X$ given by the
disjoint union of the curves $X_i$.
\end{proof}

\begin{lem} \label{Nmnr}
Let $X$ be an irreducible curve and let $j: X^{red} \hookrightarrow X$ be its underlying reduced curve. Let
$m$ be the multiplicity of $X^{red}$ in $X$. Then, for any invertible sheaf $\cL$ over $X$ we have
$$ \Nm_{X/C} (\cL) = \Nm_{X^{red}/C} (j^* \cL)^{\otimes m}. $$ 
\end{lem}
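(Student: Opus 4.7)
The plan is to exploit the nilpotent filtration of $\cO_X$ by powers of the ideal sheaf $\cI \subset \cO_X$ of $j : X^{red} \hookrightarrow X$, together with the determinant-invariance supplied by Corollary~\ref{equalitydet}. First I would unravel the $1$-cocycle definition of $\Nm_{X/C}$: since a representing cocycle $(\omega_{\lambda\mu})$ for $\cL$ pulls back to a representing cocycle $(j^*\omega_{\lambda\mu})$ for $j^*\cL$, it suffices to prove the pointwise numerical identity
$$N_{X/C}(\omega) \;=\; N_{X^{red}/C}(j^*\omega)^{m}$$
for an arbitrary local section $\omega \in \Gamma(\pi^{-1}(V),\cO_X^*)$ with $V \subset C$ open.

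The hypothesis that $X^{red}$ has multiplicity $m$ in $X$ is equivalent to $\cI^m = 0$, yielding
$$0 \;=\; \cI^m \;\subset\; \cI^{m-1} \;\subset\; \cdots \;\subset\; \cI \;\subset\; \cO_X.$$
Locally in the smooth ambient surface $|M|$ one can write $X = \mathrm{Zeros}(s_0^m)$, where $s_0$ is a local equation for $X^{red}$, so $\cI$ is locally principal and each quotient $\cI^i/\cI^{i+1}$ is a line bundle on $X^{red}$. Because $C$ is smooth, $\pi_*\cO_X$ is locally free over $\cO_C$ and each submodule $\pi_*\cI^i$ is locally free as well (being torsion-free over the discrete valuation rings $\cO_{C,p}$). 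Multiplication by $\omega$ preserves this filtration, and multiplicativity of the determinant along the short exact sequences
$$0 \lra \pi_*\cI^{i+1} \lra \pi_*\cI^{i} \lra \pi_*(\cI^i/\cI^{i+1}) \lra 0$$
gives
$$N_{X/C}(\omega) \;=\; \prod_{i=0}^{m-1} \det\bigl(\mu_\omega\big|_{\pi_*(\cI^i/\cI^{i+1})}\bigr).$$
Since $\cI$ annihilates $\cI^i/\cI^{i+1}$, the endomorphism $\mu_\omega$ acts on this quotient through $j^*\omega$.

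To finish, apply Corollary~\ref{equalitydet} on the curve $X^{red}$: the two torsion-free rank-one $\cO_{X^{red}}$-modules $\cI^i/\cI^{i+1}$ and $\cO_{X^{red}}$ admit, after twisting by a suitable effective divisor, an inclusion with torsion cokernel, so the determinants of multiplication by $j^*\omega$ coincide on them and both equal $N_{X^{red}/C}(j^*\omega)$. Multiplying over $i$ yields $N_{X/C}(\omega) = N_{X^{red}/C}(j^*\omega)^m$, which at the cocycle level translates to the desired identity $\Nm_{X/C}(\cL) = \Nm_{X^{red}/C}(j^*\cL)^{\otimes m}$. The step I expect to require the most care is the final comparison across different rank-one sheaves on the possibly singular curve $X^{red}$: one must produce the torsion-cokernel inclusion before Corollary~\ref{equalitydet} can be invoked, which is immediate on the smooth locus of $X^{red}$ via a rational trivialization but requires a small extra argument at the singular points.
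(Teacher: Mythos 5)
Your proof is correct and follows essentially the same route as the paper's: filter $\cO_X$ by powers of the nilpotent ideal of $X^{red}$, observe that multiplication by a local invertible section is block-triangular for this filtration, and take determinants on the graded pieces. The final step you flag as delicate is actually immediate: over a fine enough cover of $C$ the ideal $\cI$ is generated by a single element $t$, and multiplication by $t^i$ gives an isomorphism $\cO_{X^{red}} \cong \cI^i/\cI^{i+1}$ (injectivity because the local equation of $X^{red}$ is a nonzerodivisor in the ambient smooth surface), so each graded determinant is literally $N_{X^{red}/C}(j^*\omega)$ without any appeal to Corollary~\ref{equalitydet}.
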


\begin{proof}
The $\cO_C$-algebra $\cB = \pi_* \cO_X$ comes equipped with a nilpotent ideal sheaf $\cJ \subset \cB$ such
that $\cB_{red} = \cB/ \cJ = \pi_* \cO_{X^{red}}$. We choose a covering $\{ U_\lambda \}_{\lambda \in \Lambda}$
of $C$ by open subsets which trivialize the invertible sheaf $\cL$, i.e. there 
exists isomorphisms $\eta_\lambda : \cL_{| U_\lambda } \map{\sim}  \cB_{| U_\lambda }$ and  such that $\cJ_{|U_\lambda}$
is generated by an element $t \in \cB_{| U_\lambda}$. Then multiplication with the invertible element 
$\omega_{\lambda,\mu} =  \eta_\lambda \circ \eta^{-1}_{\mu|U_\lambda \cap U_\mu}$ preserves the 
filtration $t^{m-1} \cB_{| U_\lambda } \subset \cdots \subset t 
\cB_{| U_\lambda } \subset \cB_{| U_\lambda }$ and acts on  the quotients as multiplication with
$\omega_{\lambda,\mu}^{red} \in \cB^{red}_{| U_\lambda \cap U_\mu}$. It follows that $N_{X/C}( \omega_{\lambda,\mu}) = 
N_{X^{red}/C}(\omega_{\lambda,\mu}^{red})^{m}$, which proves the lemma. 
\end{proof}

\subsection{The Prym variety $\Prym(X/C)$}

Given a spectral cover $\pi: X \ra C$ we denote by $\Pic^0(X)$ the connected component of the identity
element of the Picard group of $X$ (see e.g. \cite{K}). We then define the Prym variety $\Prym(X/C)$ to be 
the kernel of the Norm map $\Nm_{X/C}$
$$ \Prym(X/C) := \ker \left( \Nm_{X/C} : \Pic^0(X) \lra \Pic^0(C)  \right). $$

We recall that $n$ denotes the degree of the cover $\pi: X \ra C$. We choose an ample line bundle $\cO_C(1)$ over $C$ and
denote by $\cO_X(1) = \pi^* \cO_C(1)$ its pull-back to $X$ and by $\delta = \deg \cO_C(1)$.

\begin{defi} \label{definitionsemistability}
Let $\cE$ be a coherent $\cO_X$-module. The rank and degree of $\cE$ with respect to the polarization
$\cO_X(1)$ are the rational numbers $\rk (\cE)$ and $\deg (\cE)$ determined by the Hilbert polynomial
$$  \chi(X, \cE \otimes \cO_X(l)) = n \delta l \rk (\cE) + \deg (\cE) + \rk (\cE) \chi (\cO_X). $$
The slope of $\cE$ is defined by $\mu(\cE) = \frac{\deg(\cE)}{\rk(\cE)}$.
The sheaf $\cE$ is stable (resp. semi-stable) if $\cE$ is torsion-free and for any proper subsheaf 
$\cE' \subset \cE$ we have the equality $\mu(\cE') < \mu(\cE)$ (resp. $\leq$).
\end{defi}

\begin{rem}
{\em The definitions of rank and degree of a coherent sheaf $\cE$ over $X$ above  coincide with the classical ones
when the curve $X$ is integral. The (semi-)stability condition above coincides with the (semi-)stability condition 
introduced in \cite{Simpson1}.}
\end{rem}

\begin{rem}
{\em Using the equality $\chi(X, \cE \otimes \cO_X(l)) = \chi(C,\pi_* \cE \otimes \cO_C(l))$ we obtain the 
following formulae
\begin{equation} \label{degreedirectimage}
n \rk(\cE)  = \rk (\pi_* \cE) \qquad \text{and} \qquad \deg(\cE) + \rk(\cE) \chi(\cO_X) = 
\deg(\pi_* \cE) + \rk(\pi_* \cE) \chi(\cO_C).
\end{equation}
}
\end{rem}

\begin{prop} \label{detpi}
Let $\cE$ be a torsion-free $\cO_X$-module of integral rank $r = \rk (\cE)$ and let $\cL$ be an invertible $\cO_X$-module. Then we have the relation
$$ \det ( \pi_* (\cE \otimes \cL)) = \det ( \pi_* \cE ) \otimes \Nm_{X/C}(\cL)^{\otimes r}. $$
\end{prop}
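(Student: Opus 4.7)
The plan is to recast both sides of the desired equality as \v{C}ech cocycles on $C$ relative to a common trivialization, thereby reducing the isomorphism of line bundles to a local determinant identity, which I then establish by d\'evissage to the quasi-free case.

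First, since $\pi$ is finite and flat (by miracle flatness, as $X$ is Cohen--Macaulay and $C$ is smooth) and $\cE$ is torsion-free on $X$, the direct image $\pi_*\cE$ is torsion-free on $C$, hence locally free of rank $nr$ by \eqref{degreedirectimage}. Choose an open covering $\{U_\lambda\}$ of $C$ trivializing both $\pi_*\cE|_{U_\lambda}$ and $\cL|_{\pi^{-1}(U_\lambda)}$, the latter via isomorphisms $\eta_\lambda \colon \cL \map{\sim} \cO_X$ over $\pi^{-1}(U_\lambda)$, and set $\omega_{\lambda\mu} = \eta_\lambda \circ \eta_\mu^{-1} \in \cB^*(U_\lambda\cap U_\mu)$. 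The $\eta_\lambda$ identify $\pi_*(\cE\otimes\cL)|_{U_\lambda}$ with $\pi_*\cE|_{U_\lambda}$, and in these frames the transition from $U_\mu$ to $U_\lambda$ becomes multiplication by $\omega_{\lambda\mu}$ on $\pi_*\cE|_{U_\lambda\cap U_\mu}$. Taking $\det_{\cO_C}$, the line bundle $\det\pi_*(\cE\otimes\cL)\otimes(\det\pi_*\cE)^{-1}$ is represented by the \v{C}ech cocycle $\det_{\cO_C}(\mu_{\omega_{\lambda\mu}}\colon \pi_*\cE\to\pi_*\cE)$, while $\Nm_{X/C}(\cL)^{\otimes r}$ is represented by $N_{X/C}(\omega_{\lambda\mu})^{r} = \det_{\cO_C}(\mu_{\omega_{\lambda\mu}}\colon \cB\to\cB)^{r}$. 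Hence the proposition reduces to the local identity
$$ \det_{\cO_C}\bigl(\mu_\omega\colon \pi_*\cE\to\pi_*\cE\bigr) = \det_{\cO_C}\bigl(\mu_\omega\colon \cB\to\cB\bigr)^{r} \qquad (\ast) $$
for every local section $\omega$ of $\cB^*$.

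To establish $(\ast)$ I would argue by d\'evissage. By Corollary~\ref{equalitydet} one may replace $\cE$ by any torsion-free overmodule with torsion cokernel without affecting either side of $(\ast)$. Using Theorem~\ref{strthmnonreduced}, one reduces after such an enlargement to the case $\cE = \bigoplus_i \cO_{X_i}(D_i)^{\oplus m_i}$, with $X_i \subset X$ the subscheme with nilpotent structure of order $i$. For a summand $\cO_{X_i}$, multiplication by $\omega$ is block lower-triangular with respect to the nilpotent filtration $t^{i-1}\cO_{X_i}\subset\cdots\subset t\cO_{X_i}\subset \cO_{X_i}$, with diagonal blocks equal to multiplication by the image $\omega_0$ of $\omega$ in $\cO_{X^{red}}$. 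Combining the resulting computation with Lemmas~\ref{Nmpullbackp}, \ref{Nmred} and \ref{Nmnr}, which relate $\Nm_{X/C}$ to the norm maps of the reduced components and their normalizations, then yields $(\ast)$, the exponent $r$ appearing through the rank formula \eqref{degreedirectimage} (and the integrality assumption on $r$ being what allows one to take the $r$-th tensor power on the right-hand side).

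The principal technical obstacle is the enlargement step: Theorem~\ref{strthmnonreduced} only provides quasi-freeness on a dense open, so at the finitely many points where $\cE$ fails to be quasi-free one must exhibit an explicit torsion-free overmodule with torsion cokernel that becomes quasi-free near those points. An alternative, possibly cleaner route would be to bypass the enlargement by observing that $(\ast)$ is multiplicative in short exact sequences $0\to\cL\to\cL'\to\cT\to 0$ of line bundles with torsion cokernel $\cT$ supported at smooth points of $X^{red}$: this reduces $(\ast)$ to the single case $\cL = \cO_X(p)$ with $p$ a smooth point of $X^{red}$, where the sequence $0\to\cO_X\to\cO_X(p)\to\cO_p\to 0$ tensored with $\cE$ (which remains exact at such $p$) turns the identity into a direct length computation.
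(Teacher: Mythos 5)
Your proposal is correct and follows essentially the same route as the paper: the identical \v{C}ech-cocycle reduction to the local identity $\det(\mu_\omega^{\cE}) = \det(\mu_\omega)^{r}$, which the paper likewise proves by combining Corollary~\ref{equalitydet} with the quasi-free structure theorem (Theorem~\ref{strthmnonreduced}) and the block lower-triangular computation along the nilpotent filtration. The ``enlargement'' obstacle you flag is resolved in the paper exactly by applying Corollary~\ref{equalitydet} twice, to $\cE$ and to the quasi-free model inside their common restriction to the dense open set (the determinant of multiplication depends only on the generic stalks), so no further idea is required.
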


\begin{proof}
We shall use the notation of section \ref{defnorm}. Since $\cE$ is torsion-free, the direct image
$\pi_* \cE$ is a locally free $\cO_C$-module. We choose a covering $\{ U_\lambda \}_{\lambda \in \Lambda}$ of
$C$ for which both $\cL$ and $\pi_* \cE$ are trivialized, i.e., such that there exists local
isomorphisms
$$ \alpha_\lambda : \pi_* \cE_{|U_\lambda} \stackrel{\sim}{\lra} \cO_{U_\lambda}^{\oplus rn}, \qquad
\tau_\lambda : \cL_{|U_\lambda} \stackrel{\sim}{\lra} \cB_{U_\lambda} . $$
Since $\cL$ is trivial on $U_\lambda$ we have an isomorphism
$$ \mathrm{id}_\cE \otimes \tau_\lambda : \cE \otimes \cL_{|U_\lambda} \lra  \cE \otimes \cB_{|U_\lambda}, $$
which we can consider as an isomorphism between $\cO_C$-modules
$$   \mathrm{id}_\cE \otimes \tau_\lambda : \pi_* (\cE \otimes \cL)_{|U_\lambda} \lra  \pi_*  \cE_{|U_\lambda}. $$
We compose with $\alpha_\lambda$ to obtain a trivialization of $\pi_* (\cE \otimes \cL)_{|U_\lambda}$
$$ \beta_\lambda : \pi_* (\cE \otimes \cL)_{|U_\lambda}  \stackrel{\mathrm{id}_\cE \otimes \tau_\lambda}{\lra}
 \pi_*  \cE_{|U_\lambda} \stackrel{\alpha_\lambda}{\lra} \cO_{U_\lambda}^{\oplus rn}. $$
Given $\lambda, \mu \in \Lambda$ we can now write the transition functions $f_{\lambda,\mu} = 
\beta_\lambda \circ \beta_\mu^{-1}$ of the vector
bundle $\pi_*(\cE \otimes \cL)$ as
$$f_{\lambda,\mu} : \cO_{U_\lambda}^{\oplus rn} \stackrel{\alpha_\mu^{-1}}{\lra} 
(\pi_*  \cE)_{|U_{\lambda, \mu}}   \stackrel{\mathrm{id}_\cE \otimes \omega_{\lambda, \mu}}{\lra}
(\pi_*  \cE)_{|U_{\lambda, \mu}}  \stackrel{\alpha_\lambda}{\lra}  \cO_{U_\lambda}^{\oplus rn}, $$
where we denote by $\omega_{\lambda, \mu} = \tau_\lambda \circ \tau_\mu^{-1}$ the 
$\cB^*$-valued transition functions of the line bundle $\cL$. We deduce from this expression  the
relation
$$ \det (f_{\lambda, \mu}) = \det (g_{\lambda, \mu}) \cdot \det( \mathrm{id}_\cE \otimes \omega_{\lambda, \mu}), $$
where $g_{\lambda, \mu} = \alpha_\lambda \circ \alpha_\mu^{-1}$ denotes the transition functions of the 
vector  bundle $\pi_* \cE$. Hence the proposition follows if we show the relation 
$\det( \mathrm{id}_\cE \otimes \omega_{\lambda, \mu}) = \det(\omega_{\lambda, \mu})^r$, which is proved 
in the next Lemma.
\end{proof}

\begin{lem}
Let $\cE$ be a torsion-free $\cO_X$-module and let $s \in \Gamma(U, \cB) = \Gamma(\pi^{-1}(U), \cO_X)$ be
a local section of $\cB$ over the open subset $U \subset C$. We denote by 
$\mu_s^\cE \in \Hom_{\cO_C(U)}(\pi_* \cE(U), \pi_* \cE(U))$ the map induced by multiplication
with the section $s$. Then we have an equality
$$ \det (\mu_s^\cE) = \det (\mu_s) ^r \in \Gamma(U, \cO_C). $$ 
\end{lem}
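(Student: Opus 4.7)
The asserted identity is between sections of $\cO_C$ on $U$. Since $\cO_C$ is a sheaf of integral domains on the smooth curve $C$, it suffices to verify the equality on a dense open subset of $U$, and I may therefore shrink $U$ (and $V:=\pi^{-1}(U)$) freely throughout the argument. Here $r$ denotes the (assumed integral) rank $\rk(\cE)$ from Definition \ref{definitionsemistability}.

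The main reduction is via the structure Theorem \ref{strthmnonreduced}: after further shrinking $V$, one has a quasi-free decomposition
$$
\cE|_V \;\cong\; \bigoplus_{j,i} \cO_{X^{(j)}_i}^{\oplus m_{j,i}}\bigg|_V,
$$
where $X^{(j)}$ runs over the irreducible components of $X$ (of nilpotent order $k_j$) and $X^{(j)}_i \subset X^{(j)}$ is the $i$-th nilpotent thickening of $X^{(j),\rm red}$. By Corollary \ref{equalitydet}, the determinant of $\mu_s$ on $\pi_*\cE$ is unchanged upon passing to this quasi-free replacement (the defect being torsion), so the computation reduces to a summand-by-summand analysis.

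For a single summand $\cO_{X^{(j)}_i}$, multiplication by $s\in\cB$ preserves the filtration by powers of the defining section of $X^{(j),\rm red}$ and acts on each associated graded piece as multiplication by the restriction $\bar s^{(j)}:=s|_{X^{(j),\rm red}}$. The argument from the proof of Lemma \ref{Nmnr}---applied to a section rather than a $1$-cocycle---then yields
$$
\det\bigl(\mu_s^{\cO_{X^{(j)}_i}}\bigr) \;=\; N_{X^{(j),\rm red}/C}\!\bigl(\bar s^{(j)}\bigr)^{\!i},
$$
while Lemma \ref{Nmred} combined with the same filtration argument for $\cB$ itself gives $\det(\mu_s) = \prod_j N_{X^{(j),\rm red}/C}(\bar s^{(j)})^{k_j}$. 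Multiplying the contributions across the quasi-free decomposition,
$$
\det(\mu_s^\cE) \;=\; \prod_j N_{X^{(j),\rm red}/C}\!\bigl(\bar s^{(j)}\bigr)^{\sum_i i\, m_{j,i}},
\qquad
(\det\mu_s)^r \;=\; \prod_j N_{X^{(j),\rm red}/C}\!\bigl(\bar s^{(j)}\bigr)^{r k_j}.
$$

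The \textbf{main obstacle} is the bookkeeping in the last step: the desired equality reduces to the \emph{per-component} numerical identity $\sum_i i\, m_{j,i} = r k_j$ for every $j$. This is strictly stronger than the global Hilbert-polynomial equation $\sum_{j,i} m_{j,i}\, i\, \deg(X^{(j),\rm red}/C) = rn$; it is however automatic under the hypothesis that $\cE$ has rank $r$ at each generic point of $X$, which is the effective content of ``torsion-free of integer rank $r$'' in the situations in which the lemma is applied (namely, when $\cE$ is fully supported on $X$, as will be the case in the proof of Proposition \ref{detpi}). Once this component-wise rank identity is in hand, the exponents match and the proof is complete.
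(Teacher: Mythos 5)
Your proof follows the same strategy as the paper's: reduce to a quasi-free model $\bigoplus_i \cO_{X_i}^{\oplus m_i}$ via Theorem \ref{strthmnonreduced} and Corollary \ref{equalitydet}, compute the determinant on each summand $\cO_{X_i}$ by the filtration by powers of the nilpotent generator (a lower block-triangular matrix with determinant $\det(\mu_{s_0}^{B_1})^i$), and then match exponents. The substantive difference is that you carry the bookkeeping through for reducible $X$, whereas the paper's proof is written only for $X$ irreducible, with a single chain $X_1 \subset \cdots \subset X_k = X$ and $n = k\,l$; in that case the Hilbert-polynomial identity $r = \frac{1}{k}\sum_i i\,m_i$ is precisely the exponent identity needed, so the issue you isolate does not arise there. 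Your observation that in the reducible case one needs the per-component identity $\sum_i i\,m_{j,i} = r k_j$ --- i.e.\ that $\cE$ has multiplicity $r$ at \emph{every} generic point of $X$, not merely Hilbert-polynomial rank $r$ --- is correct and is not addressed in the paper: for instance a rank-two bundle supported on one of two degree-one components is torsion-free of integral rank $1$, yet the asserted equality fails for a section $s$ vanishing on the other component. So what you flag is a genuine (if minor) imprecision in the lemma as stated, not a gap in your argument; it is harmless in the intended applications, where $\cE$ is generically of constant rank. Your device of shrinking $U$ to a dense open subset, legitimate since both sides are regular functions on the reduced curve $C$, is also a clean substitute for the paper's comparison of $\cE$ with $j_*j^*\cE$.
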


\begin{proof}
By Lemma \ref{strthmnonreduced} there exists an open subset $j: V \hookrightarrow X$ such that 
$j^* \cE$ is isomorphic to $j^* \cQ$ where $\cQ$ is a quasi-free sheaf of the form 
$\oplus_{i=1}^k \cO_{X_i}^{\oplus m_i}$. We then apply Corollary \ref{equalitydet} to the two exact sequences
$$ 0 \lra \cE \lra j_* j^* \cE \lra \cT_1 \lra 0, \qquad \text{and} \qquad
0 \lra \cQ \lra j_* j^* \cQ \lra \cT_2 \lra 0, $$
where $\cT_i$ are torsion sheaves. This leads to the equality $\det(\mu_s^\cE) = \det(\mu_s^\cQ)$. It therefore
suffices to compute $\det(\mu_s^\cQ)$ in terms of $\det(\mu_s)$. We put $n = k \cdot l$ with $l = \deg(X^{red}/C)$.
Then we have 
$$ r = \rk(\cE) =  \rk(\cQ) = \frac{1}{n} \sum_{i=1}^k m_i \rk(\pi_* \cO_{X_i}) = \frac{1}{n} \sum_{i=1}^k m_i i l
= \frac{1}{k} \sum_{i=1}^k m_i i.$$
Let $A = \cO_{C,p}$ denote the local ring at the point $p \in C$ and let $B$ denote the localization
of $\pi_* \cO_X$ at the point $p \in C$. Thus $B$ is a projective $A$-module of rank $n$ equipped with a 
filtration
$$ t^{k-1} B \subset \cdots \subset tB \subset B, \qquad t \in B \ \text{with} \ t^k = 0. $$
We put $B_1 = B/tB$, the localization of $\pi_* \cO_{X^{red}}$ at the point $p \in C$. Since $B$ is 
projective we can choose a splitting 
$$ B = B_1 \oplus t B_1 \oplus \cdots \oplus t^{k-1} B_1. $$
Using this decomposition we can write a section $s \in B$ as $s = s_0 + t s_1 + \cdots + t^{k-1} s_{k-1}$
with $s_j \in B_1$. Moreover, the localization of $\pi_* \cO_{X_i}$ at the point $p \in C$ is given by
$B_i :=  B_1 \oplus t B_1 \oplus \cdots \oplus t^{i-1} B_1$ and the matrix of the multiplication with $s$ 
in $B_i$ is with respect to this decomposition lower block-triangular and has determinant 
$\det( \mu_s^{B_i}) = \det (\mu_{s_0}^{B_1})^i$. 
Therefore 
$$ \det(\mu_s^\cQ) = \prod_{i=1}^k \det(\mu_s^{B_i})^{m_i} = \det (\mu_{s_0}^{B_1})^{\sum_{i=1}^k im_i}. $$
On the other hand $\det(\mu_s) = \det (\mu_s^{\cO_X}) = \det( \mu_s^{B_k}) = \det (\mu_{s_0}^{B_1})^k$,
which leads to the desired equality.
\end{proof}

Taking the trivial sheaf $\cE = \cO_X$ in Proposition \ref{detpi} we obtain the following description of the 
norm map:

\begin{cor}
For any invertible $\cO_X$-module, we have 
$$\Nm_{X/C}(\cL) = \det(\pi_* \cL) \otimes \det(\pi_* \cO_X)^{-1}.$$
\end{cor}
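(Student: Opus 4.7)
The plan is to obtain this Corollary as an immediate specialization of Proposition \ref{detpi} to the trivial sheaf $\cE = \cO_X$. First I would verify that $\cO_X$ satisfies the hypotheses of the Proposition: it is visibly torsion-free, and the rank in the sense of Definition \ref{definitionsemistability} is integral and equal to $1$. Indeed, plugging $\cE = \cO_X$ into the defining identity
\begin{equation*}
\chi(X, \cE \otimes \cO_X(l)) = n \delta l \rk(\cE) + \deg(\cE) + \rk(\cE) \chi(\cO_X)
\end{equation*}
and using $\chi(X, \cO_X(l)) = \chi(C, \pi_* \cO_C(l)) = n \delta l + \chi(\pi_* \cO_X)$ together with $\chi(\pi_* \cO_X) = \chi(\cO_X)$ (a consequence of the Leray spectral sequence for the finite map $\pi$), we read off $\rk(\cO_X) = 1$ and $\deg(\cO_X) = 0$.

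Once $r = 1$ is established, Proposition \ref{detpi} applied to $\cE = \cO_X$ and the given invertible sheaf $\cL$ yields
\begin{equation*}
\det(\pi_* \cL) = \det(\pi_* (\cO_X \otimes \cL)) = \det(\pi_* \cO_X) \otimes \Nm_{X/C}(\cL),
\end{equation*}
and tensoring both sides by $\det(\pi_* \cO_X)^{-1}$ gives the stated formula $\Nm_{X/C}(\cL) = \det(\pi_* \cL) \otimes \det(\pi_* \cO_X)^{-1}$. There is no genuine obstacle here — the entire content was already packaged in Proposition \ref{detpi}; the only point worth checking, so that this is not circular, is that the rank of $\cO_X$ is exactly $1$, which as above is a direct consequence of the normalization chosen in Definition \ref{definitionsemistability}.
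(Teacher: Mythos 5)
Your proposal is correct and is exactly the paper's argument: the corollary is obtained by taking $\cE = \cO_X$ in Proposition \ref{detpi}. Your extra verification that $\rk(\cO_X) = 1$ (via $\chi(X,\cO_X(l)) = \chi(C,\pi_*\cO_X \otimes \cO_C(l)) = n\delta l + \chi(\cO_X)$, using that $\pi$ is finite so $\chi(\pi_*\cO_X)=\chi(\cO_X)$) is accurate and a reasonable point to make explicit, though the paper takes it as immediate.
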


\bigskip

\section{The group of connected components of $\Prym(X/C)$}

In this section we give the proof of Theorem 1.1.

\subsection{Part (1)}

Given a spectral cover $X$ we will associate a covering 
$$ p : \widetilde{X} \lra X $$
as follows: let $X = \bigcup_{i = 1}^r X_i$ be its decomposition
into irreducible components $X_i$,  let $X_i^{red}$ be the underlying reduced curve of 
$X_i$, let $m_i$ be the multiplicity of $X_i^{red}$ in $X_i$ and let $\widetilde{X}_i^{red}$ be the normalization
of $X_i^{red}$. Since $X_i^{red}$ is embedded in the smooth surface $|M|$, there exists a 
sequence of blowing-ups $bl : \widetilde{|M|} \ra  |M|$ of the surface $|M|$ at reduced points (depending
on the curve $X_i^{red}$) such that
the proper transform of $X_i^{red}$ equals its normalization $\widetilde{X}_i^{red}$. We define
$\widetilde{X}_i \subset \widetilde{|M|}$  to be the proper transform of the non-reduced curve $X_i \subset |M|$. 
We take
$$\widetilde{X} = \bigsqcup_{i=1}^r \widetilde{X}_i$$ 
to be the disjoint union of the curves $\widetilde{X}_i$ together
with the natural map $p$ onto $X$. Note that the multiplicity of $\widetilde{X}_i^{red}$ in $\widetilde{X}_i$ also
equals $m_i$.

\begin{lem} \label{normnonred} 
The covering $p: \widetilde{X} \ra X$ constructed above  has the following properties:
\begin{enumerate}
\item the cokernel of the canonical inclusion $\cO_X \hookrightarrow p_* \cO_{\widetilde{X}}$ is a torsion
$\cO_X$-module,
\item the underlying reduced curve $\widetilde{X}^{red}$ of $\widetilde{X}$ is smooth.
\item the map induced by pull-back under $p$
$$ \Pic^0(X) \map{p^*} \Pic^0(\widetilde{X})$$
is surjective and has connected kernel.
\item we have an equality
$$ \pi_0(\Prym (X/C)) = \pi_0(\Prym (\widetilde{X}/C)).$$
\end{enumerate}
\end{lem}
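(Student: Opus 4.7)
The plan is to prove the four properties in the order listed, establishing (1) and (2) essentially from the construction, then deducing (3) via a Leray-type cohomology long exact sequence, and finally (4) from Lemma~\ref{Nmpullbackp} combined with (3).

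For (1) and (2), I would first observe that by construction $p$ is built from the normalization maps $\widetilde{X}_i^{red} \to X_i^{red}$ (plus the taking of a disjoint union across irreducible components and preservation of the nilpotent structure). The normalization map is an isomorphism away from the finitely many singular points of $X^{red}$, and disjoint-union at the singular intersection points of distinct components only alters the structure sheaf at those points. Hence the cokernel of $\cO_X \hookrightarrow p_* \cO_{\widetilde{X}}$ is supported on a finite set, giving (1). Property (2) is immediate since $\widetilde{X}^{red} = \bigsqcup \widetilde{X}_i^{red}$ and each $\widetilde{X}_i^{red}$ is, by definition of normalization, smooth.

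For (3), the key is the short exact sequence of sheaves on $X$
\[
 1 \lra \cO_X^* \lra p_* \cO_{\widetilde{X}}^* \lra \cQ \lra 1,
\]
where the quotient $\cQ$ is a skyscraper sheaf supported at the finitely many points where $p$ fails to be an isomorphism. Using $R^1 p_* \cO_{\widetilde{X}}^* = 0$ (since $p$ is finite) together with the Leray spectral sequence gives $H^1(X, p_*\cO_{\widetilde{X}}^*) = \Pic(\widetilde{X})$ and the long exact sequence of cohomology becomes
\[
 H^0(X, \cQ) \lra \Pic(X) \map{p^*} \Pic(\widetilde{X}) \lra 0,
\]
since $H^1(X, \cQ) = 0$ for a skyscraper. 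This gives surjectivity of $p^*$ on $\Pic^0$. For the connectedness of $\ker p^*$, one must analyse the stalks of $\cQ$: at each point $x$ the stalk is the quotient of local units of the (semi-)normalization by the local units of $\cO_{X,x}$. For a singularity of a reduced curve this quotient is an extension of copies of $\mathbb G_a$ (from the nilpotent part of the conductor) and $\mathbb G_m$ (from gluing branches), hence a connected commutative algebraic group; the same remains true after accounting for the nilpotent thickening along the $X_i$. Therefore $H^0(X, \cQ)$ is a connected algebraic group, and its image in $\Pic^0(X)$ — which equals $\ker p^*$ — is connected.

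For (4), Lemma~\ref{Nmpullbackp} gives the identity $\Nm_{\widetilde{X}/C} \circ p^* = \Nm_{X/C}$. Hence $p^*$ restricts to a homomorphism $\Prym(X/C) \to \Prym(\widetilde{X}/C)$, and $\ker p^* \subset \Prym(X/C)$ automatically (since $\Nm_{X/C}$ on $\ker p^*$ factors through $\Nm_{\widetilde{X}/C}(\cO_{\widetilde{X}}) = \cO_C$). Combined with (3), one obtains an exact sequence
\[
 0 \lra \ker p^* \lra \Prym(X/C) \lra \Prym(\widetilde{X}/C) \lra 0
\]
with connected kernel, yielding the equality $\pi_0(\Prym(X/C)) = \pi_0(\Prym(\widetilde{X}/C))$.

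The main obstacle I expect is the connectedness statement in (3): one must correctly identify the local contributions of $\cQ$ at the singular/non-reduced points and verify that each is a connected algebraic group. Away from this local analysis, the argument is formal.
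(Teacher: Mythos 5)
Your proposal is correct, and parts (1), (2) and (4) coincide with the paper's argument: (1) because $p$ is an isomorphism outside a finite set, (2) by construction, and (4) by combining Lemma \ref{Nmpullbackp} with the connectedness of $\ker(p^*)$ to obtain the short exact sequence $0 \ra \ker(p^*) \ra \Prym(X/C) \ra \Prym(\widetilde{X}/C) \ra 0$. The genuine difference is in part (3). You work with the units sequence $1 \ra \cO_X^* \ra p_*\cO_{\widetilde{X}}^* \ra \cQ \ra 1$ and its cohomology, reducing everything to showing that the skyscraper $H^0(X,\cQ)$ is a connected algebraic group via a local analysis of $(p_*\cO_{\widetilde{X}}^*)_x/\cO_{X,x}^*$. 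The paper instead restricts invertible sheaves to the underlying reduced curves, getting two surjections $\Pic(X) \ra \Pic(X^{red})$ and $\Pic(\widetilde{X}) \ra \Pic(\widetilde{X}^{red})$ with unipotent kernels (Liu, Lemma 7.5.11), and applies the snake lemma: $\ker(p^*)$ is then built from unipotent pieces and from $\ker(p_{red}^*)$, which by Liu, Lemma 7.5.13 is an extension of a torus by a unipotent group, hence connected. What the paper's route buys is precisely the avoidance of the step you yourself flag as the main obstacle: the verification that each stalk of $\cQ$ remains a connected group in the presence of nilpotents is asserted but not carried out in your sketch (it does work --- the units of a non-reduced local ring are an extension of the units of its reduction by the connected group $1+\mathrm{nil}$, so the quotient is still an extension of copies of $\mathbb{G}_m$ and $\mathbb{G}_a$ --- but the paper's factorization through the reduced curve is the cleaner way to organize exactly this computation, with citations in place of the local analysis). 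One small point, present in both write-ups: surjectivity of $\Pic^0(X) \ra \Pic^0(\widetilde{X})$ does not follow from surjectivity on all of $\Pic$ alone; you should first establish that $\ker(p^*)$ is connected, so that it lies in $\Pic^0(X)$ and $\Pic^0(X)/\ker(p^*)$ is an open connected subgroup of $\Pic(\widetilde{X})$, hence equal to $\Pic^0(\widetilde{X})$ --- this is the order in which the paper states the conclusions.
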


\begin{proof}
\begin{enumerate}
\item This is clear since $p : \widetilde{X} \ra X$ is an isomorphism outside a finite set of points.

\item We clearly have $\widetilde{X}^{red} = \bigsqcup_{i=1}^r \widetilde{X}^{red}_i$ and the
curves $\widetilde{X}^{red}_i$ are smooth by constuction.

\item We consider the two exact sequences obtained by restricting invertible sheaves to the underlying reduced curve
\begin{equation*}
\begin{array}{ccccccc}
0 \lra & U_1 & \lra & \Pic(X) & \lra & \Pic(X^{red}) & \lra 0 \\
    &  \downarrow^\alpha &  & \downarrow^{p^*} &    & \downarrow^{p_{red}^*} &  \\
0 \lra & U_2 & \lra & \Pic(\widetilde{X}) & \lra & \Pic(\widetilde{X}^{red}) & \lra 0,
\end{array}
\end{equation*}
which are surjective with unipotent kernels $U_1$ and $U_2$ by \cite{Liu} Lemma 7.5.11. Then by the
snake lemma the kernel $\ker(p^*)$ fits into the exact sequence
$$ 0 \lra \ker(\alpha) \lra  \ker(p^*) \lra \ker (p_{red}^*) \lra \coker (\alpha) \lra 0.$$
Note that $\ker(\alpha)$ and $\coker(\alpha)$ are unipotent groups.
We shall denote by $V$ the kernel of the last map. By \cite{Liu} Lemma 7.5.13 the kernel $\ker( p_{red}^*)$ is an
extension of a toric group by an unipotent group. The same holds for $V$, since there
are no non-zero maps from a toric group to an unipotent group. Hence $V$ and $\ker(\alpha)$  are connected, so
$\ker (p^*)$ is connected. Hence $\ker (p^*)$ is contained in the connected component $\Pic^0(X)$ and we
obtain that $p^* : \Pic^0(X) \ra \Pic^0(\widetilde{X})$ is surjective.

\item
Because of Lemma \ref{Nmpullbackp} we have an exact sequence
$$ 0 \lra \ker(p^*) \lra \Prym(X/C) \map{p^*} \Prym(\widetilde{X}/C) \lra 0.$$
The equality between the groups of connected components now follows since $\ker(p^*)$ is
connected.
\end{enumerate}
\end{proof}

The previous lemma implies that it is enough to show the equality $\pi_0(\Prym (\widetilde{X}/C)) = \widehat{K}$.
By Lemma \ref{Nmred} and Lemma \ref{Nmnr} the Norm map $\Nm_{\widetilde{X}/C}$
factorizes as follows
$$ \Nm_{\widetilde{X}/C} : \Pic^0(\widetilde{X}) \map{j^*} \Pic^0(\widetilde{X}^{red}) 
= \prod_{i=1}^r \Pic^0(\widetilde{X}^{red}_i) \map{\prod [m_i]}  
\prod_{i=1}^r \Pic^0(\widetilde{X}^{red}_i) \map{\prod \Nm } \Pic^0(C). $$
Moreover $j^*$ is surjective and $\ker (j^*)$ is connected (see e.g.\cite{Liu} Lemma 7.5.11). It suffices therefore to compute
$\pi_0( \ker (h))$, where $h : \Pic^0(\widetilde{X}^{red})  \ra  \Pic^0(C)$ denotes the 
composite of the last two maps. We also consider the composite homomorphism
$$ f : \Pic^0(C) \map{\Delta} \Pic^0(C)^r \map{\prod [m_i]} \Pic^0(C)^r
 \map{\prod \widetilde{\pi}_i^*} 
\prod_{i=1}^r \Pic^0(\widetilde{X}_i^{red}) = \Pic^0(\widetilde{X}^{red}),$$
where $\Delta(L) = (L,\ldots, L)$ is the diagonal map.
We note that the duals $\widehat{\widetilde{\pi}}_i^*$ and $\widehat{[m_i]}$ coincide 
with $\Nm_{\widetilde{X}_i^{red}/C}$ and $[m_i]$ under the identifications
$\widehat{\Pic^0(C)} \cong \Pic^0(C)$ and $\widehat{\Pic^0(\widetilde{X}_i^{red})} \cong 
\Pic^0(\widetilde{X}_i^{red})$ 
given by the principal 
polarizations on the Jacobians (see \cite{BL} section 11.4), and that the dual $\widehat{\Delta}$ of
$\Delta$ is the multiplication map
on $\Pic^0(C)$ (see e.g. \cite{BL} exercise 2.6 (12)). Hence we obtain that $\hat{f} = h$. Thus
$\pi_0(\Prym (\widetilde{X}/C)) = \pi_0 (\ker(\hat{f}))$. Now we apply Lemma \ref{mapf} to $f$ 
and we obtain the desired result since $\ker(f) =  \bigcap_{i=1}^r(K_i)_{m_i}$.

\subsection{Part (2)}

We consider the morphism $f : \Pic^0(C) \ra \Pic^0(\widetilde{X}^{red})$ introduced in the previous section. Moreover
the morphism $g : \Pic^0(\widetilde{X}^{red}) \ra \Pic^0(C)$ defined by 
$$g(\cL_1, \ldots, \cL_r) = \bigotimes_{i=1}^r \Nm_{\widetilde{X}^{red}_i/C}(\cL_i)$$ 
satisfies the relation
$g \circ f = [n]$. We are therefore in a position to apply Lemma \ref{lemmatwo} to the morphism $f$. This proves
part (2) for the Prym variety $\Prym(\widetilde{X}/C)$. Since  by Lemma \ref{normnonred} the natural map $p^* :
\Pic^0(X) \ra \Pic^0(\widetilde{X})$ induces an isomorphism $\pi_0(\Prym(X/C)) = \pi_0 (\Prym(\widetilde{X}/C))$,
we are done.

\subsection{Part (3)}

The if part follows immediately from the formula proved in part (1). Suppose now that $K = \Pic^0(C)[n]$.
With the  notation above we have $n = \sum_{i = 1}^r  m_i \deg (X_i^{red}/ C)$ and $K = \bigcap_{i=1}^r (K_i)_{m_i}$, 
from which we deduce that $r = 1$.
On the other hand $K = (K_1)_{m_1} = \Pic^0(C)[n]$ implies that $K_1 = \Pic^0(C)[d_1]$ with 
$d_1= \deg ( X_1^{red}/ C)$. But this can only
happen if $d_1 = 1$. Hence $m_1 = n$ and we are done. 

\bigskip

\section{Endoscopic subloci of $\AAA_n$}

\subsection{Cyclic Galois covers} \label{sectcyclicGalois}

We consider a smooth projective curve $C$ and a line bundle $M \in \Pic(C)$. Let $\Gamma$ be a cyclic
subgroup of order $d$ of the group of $n$-torsion line bundles $\Pic^0(C)[n]$ and let 
$$ \varphi : D \lra C $$
be the \'etale Galois covering of $C$ associated  to $\Gamma \subset \Pic^0(C)[n]$. By definition
$D = \Spec( \cE_\Gamma)$ where $\cE_\Gamma = \oplus_{L \in \Gamma} L$ is the direct sum of
all line bundles  $L$ in $\Gamma$ with the natural $\cO_C$-algebra structure. Note that the Galois
group of the covering $\varphi : D \ra C$ equals $\Gamma \cong \Aut(D/C)$. We 
introduce the line bundle $N = \varphi^* M$. Then the line bundle $N$ has a canonical 
$\Gamma$-linearization, hence we obtain a canonical action of $\Gamma$ on the total space
$|N|$. We notice that the canonical coordinate $t \in H^0(|N|, \pi^* N)$ is invariant 
under this $\Gamma$-action.

\bigskip

We consider a spectral cover of degree $m$ over $D$ with
associated line bundle $N$ given by a global section $s \in H^0(|N|, \pi^* N^m)$. We 
can apply a Galois automorphism $\sigma \in \Gamma$ to $s$ and denote its image by
$s^\sigma$. We introduce 
$$ \widehat{s} = \prod_{\sigma \in \Gamma} s^\sigma \in H^0(|N|, \pi^* N^n), \qquad \text{with}  \ \  n = d \cdot m. $$
We observe that $\widehat{s}$ is $\Gamma$-invariant, hence $\widehat{s}$ descends to a section
over $|M|$, which we also denote by $\widehat{s}$. Hence we obtain a map
$$ \Phi_\Gamma : \AAA_m(D,N) \lra \AAA_n := \AAA_n(C,M),  \qquad b \mapsto \Phi_\Gamma(b). $$
with $a = \Phi_\Gamma(b)$ defined by the relation $\widehat{s}_b = s_a$, where $s_b \in
H^0(|N|, \pi^* N^m)$ is the global section $s_b = t^m + b_1 t^{m-1} + b_2 t^{m-2} + \cdots + b_m$
associated to $b = (b_1, \ldots, b_m)$ with $b_j \in H^0(D, N^j)$. Since the zero divisor of the section $\widehat{s}$
has a finite number of irreducible components, we immediately see that the fiber $\Phi_\Gamma^{-1}(\widehat{s})$ is
finite, hence $\dim  \ \im \Phi_\Gamma = \dim \AAA_m(D,N)$.  We also introduce the subspace
$$ \AAA_m^{\Gamma}(D,N) = H^0(D, N)_{var} \oplus  \bigoplus_{j=2}^m H^0(D,N^j) \subset \AAA_m(D,N), $$
where 
$H^0(D, N)_{var}$ denotes the $\Gamma$-variant
subspace of $H^0(D, N)$, i.e. the direct sum of the character spaces $H^0(D, N)_\chi$ for non-trivial
characters $\chi$ of the group $\Gamma$.

\begin{lem}
We have the inclusion
$$ \Phi_\Gamma (\AAA_m^{\Gamma}(D,N)) \subset \AAA^0_n. $$ 
\end{lem}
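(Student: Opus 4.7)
The plan is to unwind the definitions and identify precisely the $t^{n-1}$-coefficient of $\widehat{s}_b$, then use the character decomposition of $H^0(D,N)$ as a $\Gamma$-representation to see that this coefficient vanishes on $\mathcal{A}_m^{\Gamma}(D,N)$.

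First, recall that $\mathcal{A}_n^0 \subset \mathcal{A}_n$ is the subspace of characteristics $a = (a_1, \ldots, a_n)$ with $a_1 = 0$; equivalently, via \eqref{polynomiala}, it consists of those $a$ for which the coefficient of $t^{n-1}$ in $s_a$ vanishes. So the statement reduces to showing that for every $b = (b_1, \ldots, b_m) \in \mathcal{A}_m^{\Gamma}(D,N)$, the $t^{n-1}$-coefficient of $\widehat{s}_b = \prod_{\sigma \in \Gamma} s_b^\sigma$ is zero.

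Now expand the product. Each factor $s_b^\sigma = t^m + b_1^\sigma t^{m-1} + b_2^\sigma t^{m-2} + \cdots + b_m^\sigma$ has top term $t^m$, and there are $d = |\Gamma|$ factors, so $\widehat{s}_b$ has top term $t^{dm} = t^n$. The coefficient of $t^{n-1} = t^{dm-1}$ is obtained by selecting the subleading term $b_1^\sigma t^{m-1}$ from exactly one factor and $t^m$ from all other factors. Summing over the choice of factor gives
$$ (\widehat{s}_b)_1 \;=\; \sum_{\sigma \in \Gamma} b_1^\sigma \;\in\; H^0(D, N)^{\Gamma} \;=\; H^0(C, M),$$
where the last identification uses that $\varphi : D \to C$ is the \'etale $\Gamma$-cover with $N = \varphi^* M$.

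Finally, by assumption $b_1$ lies in $H^0(D,N)_{var}$, i.e.\ the sum of the $\chi$-isotypic components of $H^0(D,N)$ for non-trivial characters $\chi$ of $\Gamma$. The projector onto the trivial isotypic component (the $\Gamma$-invariants) is $\frac{1}{d}\sum_{\sigma \in \Gamma} \sigma^*$, so an element of $H^0(D,N)_{var}$ satisfies $\sum_{\sigma \in \Gamma} b_1^\sigma = 0$. Hence $(\widehat{s}_b)_1 = 0$ and $\Phi_\Gamma(b) \in \mathcal{A}_n^0$, as required. There is no real obstacle here: the argument is just a binomial-type expansion of the product together with character theory for the finite abelian group $\Gamma$.
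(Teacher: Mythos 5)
Your argument is correct and is essentially the paper's own proof: both identify the $t^{n-1}$-coefficient of $\widehat{s}_b$ as $\sum_{\sigma\in\Gamma}\sigma^* b_1$ and observe that its vanishing is exactly the condition that the $\Gamma$-invariant component of $b_1$ vanishes, i.e.\ that $b_1\in H^0(D,N)_{var}$. The extra detail you supply (the expansion of the product and the averaging projector) is just a fuller write-up of the same computation.
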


\begin{proof}
It suffices to compute the coefficient of $t^{n-1}$ in $\widehat{s}_b $, which equals 
$\sum_{\sigma \in \Gamma} \sigma^*b_1$. We immediately see that the relation 
$\sum_{\sigma \in \Gamma} \sigma^*b_1 = 0$ is equivalent to $b^{(0)}_1 = 0$, where 
$b^{(0)}_1$ denotes the $\Gamma$-invariant component of $b_1$.
\end{proof}

We denote the images of $\Phi_\Gamma$ by 
$$\AAA_\Gamma^0 \subset \AAA_n^0 \qquad \text{and} \qquad \AAA_\Gamma \subset \AAA_n.$$ 
The subvariety $\AAA_\Gamma$
admits the following characterization: for $a \in \AAA_n$ we denote by 
$$Y_a = X_a \times_C D$$ 
the fiber product of $X_a$ and $D$ over $C$. Then $Y_a$ is a spectral cover over $D$ of degree $n$ associated to the
line bundle $N$. The following lemma follows immediately from the definition of $\AAA_\Gamma$.

\begin{lem} \label{charAGamma}
The characteristic $a$ lies in $\AAA_\Gamma$ if ond only if the fiber product $Y_a$ decomposes as 
$$Y_a = \bigcup_{\sigma \in \Gamma} Z^\sigma,$$
where $Z$ is a spectral cover of degree $m = \frac{n}{d}$ over $D$ and $Z^\sigma$ is its image under the Galois
automorphism $\sigma \in \Gamma$.
\end{lem}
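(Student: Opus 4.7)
The plan is to unravel both the definition of $\Phi_\Gamma$ and the construction $Y_a = X_a \times_C D$ at the level of defining sections on the total space $|N|$, and then observe that everything is controlled by a single monic polynomial identity in the coordinate $t$.

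First I would set up the relevant commutative diagram of total spaces: the étale Galois cover $\varphi: D \to C$ pulls back to an étale Galois cover $\bar\varphi : |N| \to |M|$ with group $\Gamma$, and the canonical coordinate $t$ on $|M|$ pulls back to the canonical coordinate $t$ on $|N|$. Consequently, for $a = (a_1,\ldots,a_n) \in \AAA_n$, the pullback $\bar\varphi^* s_a$ is the monic section
$$ \bar\varphi^* s_a = t^n + (\varphi^* a_1)t^{n-1} + \cdots + \varphi^* a_n \in H^0(|N|, \pi^* N^n), $$
whose zero scheme is, by the base change definition, exactly the fiber product $Y_a = X_a \times_C D$.

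For the forward direction, I assume $a = \Phi_\Gamma(b)$ for some $b \in \AAA_m(D,N)$. By the very definition of $\Phi_\Gamma$, the section $\widehat{s}_b = \prod_{\sigma \in \Gamma} s_b^\sigma$ on $|N|$ is $\Gamma$-invariant and descends to $s_a$ on $|M|$; equivalently $\bar\varphi^* s_a = \widehat{s}_b$. Setting $Z = \mathrm{Zeros}(s_b)$, a spectral cover of degree $m$ over $D$, the zero scheme of the product $\prod_\sigma s_b^\sigma$ is the scheme-theoretic union $\bigcup_{\sigma \in \Gamma} Z^\sigma$, so $Y_a$ has the asserted decomposition.

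For the converse, suppose $Y_a = \bigcup_{\sigma \in \Gamma} Z^\sigma$ for some degree $m$ spectral cover $Z \subset |N|$. Then $Z$ is cut out by a monic section $s_Z = t^m + b_1 t^{m-1} + \cdots + b_m$ with $b = (b_1,\ldots,b_m) \in \AAA_m(D,N)$, and $Z^\sigma$ is cut out by $s_Z^\sigma = \sigma^* s_Z$. The scheme-theoretic union of the $Z^\sigma$ is cut out by the product $\prod_\sigma s_Z^\sigma = \widehat{s}_b$, and by hypothesis this coincides as a zero subscheme with $\bar\varphi^* s_a$. Both $\bar\varphi^* s_a$ and $\widehat{s}_b$ are monic polynomials in $t$ of degree $n$ with coefficients pulled back from the base, so equality of the associated effective Cartier divisors forces equality of sections: $\bar\varphi^* s_a = \widehat{s}_b$. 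Descending by $\Gamma$-invariance yields $s_a = \widehat{s}_b$ on $|M|$, i.e. $a = \Phi_\Gamma(b) \in \AAA_\Gamma$.

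The only subtle point I anticipate is the step in the converse where I need equality of the defining sections, not merely of their zero schemes; this is where monicity in $t$ (so that no scalar ambiguity can occur) is essential, and where Remark on translation and the structure of spectral sections make the argument clean.
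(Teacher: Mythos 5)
Your proof is correct and follows exactly the route the paper has in mind: the paper dismisses this lemma with ``follows immediately from the definition of $\AAA_\Gamma$,'' and your argument is precisely that definition unpacked, identifying $Y_a$ with the zero scheme of $\bar\varphi^* s_a$ and matching it against the divisor of $\prod_{\sigma}s_Z^{\sigma}$. Your attention to the converse --- that monicity in $t$ (together with $H^0(|N|,\cO_{|N|})=\CC$) upgrades equality of divisors to equality of sections --- is the one point genuinely worth recording, and you handle it correctly.
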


\bigskip

We also need to introduce some natural subvarieties of the Hitchin spaces $\AAA^0_n$ and 
$\AAA_n$, which will be used in the proof of Theorem 1.2.

\bigskip

For any divisor $l \not= 1$ of $n$, with $n = k \cdot l$, we consider the natural
$k$-th power map
$$ \Phi_k : \AAA_l \lra \AAA_n, $$
where $\Phi_k(b) = a$ is defined by the relation
$$ s_a = (t^l + b_1 t^{l-1} + \cdots + b_l)^k \in H^0(|M|, \pi^*M^n), \qquad \text{for} \
b = (b_1, \ldots, b_l) \in \AAA_l. $$
We shall abuse notation and will also denote by $\AAA_l$ its image $\Phi_k(\AAA_l) \subset \AAA_n$.
Note that $\Phi_k(\AAA^0_l) \subset \AAA^0_n$ and we also denote this image by $\AAA^0_l$.

\bigskip

Given two positive integers $n_1,n_2$ such that $n_1 + n_2 = n$, we introduce the map
$$ \Phi_{n_1,n_2} : \AAA_{n_1} \times \AAA_{n_2} \lra \AAA_n, $$
with $a = \Phi_{n_1,n_2}(b,c)$ defined by the relation $s_a = s_b \cdot s_c$, where
$s_b = t^{n_1} + b_1 t^{n_1 -1} + \cdots + b_{n_1}$ and $s_c = t^{n_2} + c_1 t^{n_2 -1} + \cdots + c_{n_2}$ 
for $b = (b_1, \ldots, b_{n_1}) \in \AAA_{n_1}$ and $c = (c_1, \ldots, c_{n_2}) \in \AAA_{n_2}$. We define $(\AAA_{n_1} \times  \AAA_{n_2})_0 \subset 
\AAA_{n_1} \times \AAA_{n_2}$ to be the
subset of pairs $(b,c)$ satisfying the relation $b_1 + c_1 = 0$. We shall denote by 
$\AAA_{n_1, n_2} \subset \AAA_n$ the image of $\Phi_{n_1,n_2}$ and by  $\AAA^0_{n_1, n_2}$ the
subset $\AAA_{n_1, n_2} \cap 
\AAA^0_n = \Phi_{n_1,n_2}[(\AAA_{n_1} \times  \AAA_{n_2})_0 ]$. 
 
\bigskip

\subsection{Proof of Theorem 1.2}

We show here the analogue of Theorem 1.2 for the $\GL(n)$-Hitchin space $\AAA_n$. Note that both 
statements are equivalent by Remark \ref{translationa1}.
Given a spectral cover $\pi: X_a \ra C$ with $a \in \AAA_n$, we denote the subgroup of $\Pic^0(C)$ defined
in \eqref{defK} by $K_a$. Let $\Gamma \subset \Pic^0(C)[n]$ be a cyclic subgroup of order $d$.

\begin{thm}
We have an equivalence
$$ \Gamma \subset K_a \qquad \iff \qquad a \in \AAA_\Gamma. $$
\end{thm}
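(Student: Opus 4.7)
My plan works with the factorization of the defining section $\varphi^*s_a \in H^0(|N|, \pi^*N^n)$ of $Y_a = X_a \times_C D$. For each irreducible component $X_i$ of $X_a$, the base change $X_i^{red}\times_C D$ has exactly $|H_i|$ irreducible components $W_{i,1},\dots,W_{i,|H_i|}$, where $H_i := \Gamma\cap K_i$: its normalization is $\widetilde{X}_i^{red}\times_C D$, which is smooth with $|H_i|$ connected components (the number of components of the \'etale cover of $\widetilde{X}_i^{red}$ classified by $\widetilde{\pi}_i^*:\Gamma \to \Pic^0(\widetilde{X}_i^{red})$), and irreducible components agree with those of $X_i^{red}\times_C D$ over the dense open where $X_i^{red}$ is smooth. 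Each $W_{i,k}$ has degree $n_i/|H_i|$ over $D$ (with $n_i := \deg(X_i^{red}/C)$), and $\Gamma$ acts transitively on $\{W_{i,k}\}_k$ with stabilizer of order $d/|H_i|$. The key technical input is that any reduced irreducible projective subcurve $W\subset|N|$ finite of degree $\delta$ over $D$ is a spectral subcover: applying Cayley--Hamilton to the multiplication-by-$t$ endomorphism of the locally free rank-$\delta$ $\cO_D$-module $\pi_*\cO_W$ produces a monic $P_W\in H^0(|N|, \pi^*N^\delta)$ whose zero scheme contains $W$, and matching degrees forces $W = \mathrm{Zeros}(P_W)$; in particular $\cO_{|N|}(W) = \pi^*N^\delta$. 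Applied to the $W_{i,k}$ we obtain monic defining sections $s_{W_{i,k}}\in H^0(|N|, \pi^*N^{n_i/|H_i|})$ with $\varphi^*s_a = \prod_{i,k} s_{W_{i,k}}^{m_i}$ and $\sigma^*s_{W_{i,k}} = s_{W_{i,\sigma\cdot k}}$ (all up to nonzero scalars, which disappear after normalizing monicity in $t$).

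For the forward direction ($\Leftarrow$), suppose $a\in\AAA_\Gamma$ so that $\varphi^*s_a = \prod_\sigma s_b^\sigma$, equivalently $Y_a = \sum_\sigma Z^\sigma$ as cycles in $|N|$. Set $\nu_{i,k} := \mult(W_{i,k}, Z)$ and match multiplicities at a fixed $W_{i,k_0}$: the left side gives $m_i$ by \'etale base change of $X_a = \sum_i m_i X_i^{red}$, while the right side gives $\sum_\sigma \nu_{i,\sigma^{-1}\cdot k_0}$, which equals $(d/|H_i|)\sum_{k'}\nu_{i,k'}$ by orbit-stabilizer counting on the single $\Gamma$-orbit $\{W_{i,k}\}_k$. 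Hence $(d/|H_i|)\mid m_i$, equivalent to $m_i\widetilde{\pi}_i^*L = 0$ on $\widetilde{X}_i^{red}$ for every $L\in\Gamma$, since $\widetilde{\pi}_i^*(\Gamma)$ is cyclic of order $d/|H_i|$. This says $\Gamma\subset(K_i)_{m_i}$ for every $i$, i.e.\ $\Gamma\subset K_a$.

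For the backward direction ($\Rightarrow$), assume $\Gamma\subset K_a$, so $m_i = (d/|H_i|)m_i'$ for positive integers $m_i'$. Choose any non-negative integers $\nu_{i,k}$ with $\sum_k\nu_{i,k} = m_i'$ for each $i$ (e.g.\ concentrate all mass at a single $k$) and set $s_b := \prod_{i,k} s_{W_{i,k}}^{\nu_{i,k}}$. Degree counting gives $s_b \in H^0(|N|, \pi^*N^{\sum_i m_i'n_i/|H_i|}) = H^0(|N|, \pi^*N^{n/d}) = H^0(|N|, \pi^*N^m)$, using $\sum_i m_in_i = n$. The same orbit-stabilizer counting yields $\prod_\sigma \sigma^*s_b = \prod_{i,k} s_{W_{i,k}}^{(d/|H_i|)m_i'} = \prod_{i,k} s_{W_{i,k}}^{m_i} = \varphi^*s_a$ up to a scalar, forced to be $1$ by monicity in $t$ on both sides. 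Hence $a = \Phi_\Gamma(b) \in \AAA_\Gamma$. The principal obstacle throughout is the Cayley--Hamilton step identifying each $W_{i,k}$ as a spectral subcover of the predicted degree $n_i/|H_i|$; once this is in hand, both directions reduce to orbit-stabilizer bookkeeping.
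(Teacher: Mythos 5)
Your argument is correct, and it takes a genuinely different route from the paper's. The paper proceeds by a three-step case analysis: first the integral case (via the normalization $\widetilde{X}_a$ and \cite{BL} Proposition 11.4.3, plus transitivity of $\Gamma$ on the components of $Y_a$), then the irreducible non-reduced case (introducing $\Gamma_{red}=[k](\Gamma)$, the intermediate Galois cover $\overline{D}=D/S$, and the identity $K_a=[k]^{-1}(K_{a_{red}})$), and finally the reducible case by induction on the number of components, with Lemma \ref{charAGamma} as the pivot throughout. You instead give a single uniform argument: you write the divisor $Y_a=\sum_{i,k}m_iW_{i,k}$ in terms of the $|H_i|=|\Gamma\cap K_i|$ prime components of each $X_i^{red}\times_C D$ (the component count being exactly the content of \cite{BL} 11.4.3 applied to $\widetilde{\pi}_i$), certify each $W_{i,k}$ as a monic spectral subcover of degree $n_i/|H_i|$ via Cayley--Hamilton (this step could equally be replaced by the paper's Lemma \ref{irrcomspecur} together with $\Pic(|N|)=\pi^*\Pic(D)$), and then reduce both implications to the single numerical identity $m_i=(d/|H_i|)\sum_{k}\nu_{i,k}$ obtained by orbit--stabilizer counting, combined with the observation that $\Gamma\subset(K_i)_{m_i}$ is equivalent to $(d/|H_i|)\mid m_i$ because $\widetilde{\pi}_i^*(\Gamma)$ is cyclic of order $d/|H_i|$. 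What your approach buys is the elimination of the intermediate cover $\overline{D}$, the reduction step $\Gamma\ra\Gamma_{red}$, and the induction, together with an explicit numerical criterion for $\Gamma\subset K_a$; what the paper's approach buys is a more geometric description of the descended cover $Z$ (as $\frac{k}{\gcd(k,d)}(W\times_{\overline{D}}D)$) in the non-reduced case. The only places where you should add a line of justification are the transitivity of $\Gamma$ on $\pi_0(X_i^{red}\times_C D)$ (immediate since this is a $\Gamma$-torsor over the irreducible curve $X_i^{red}$) and the fact that the $W_{i,k}$ are pairwise distinct across different $i$, which is needed for the multiplicity matching; both are routine.
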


\begin{proof}
We first show the equivalence in the case the spectral cover $X_a$ is integral. In that case
we can consider its normalization $\widetilde{X}_a$, which comes with a natural projection
$$ \widetilde{\pi}_a : \widetilde{X}_a \lra C.$$
By \cite{BL} Proposition 11.4.3 we have $\Gamma \subset K_a$ if and only if  $\widetilde{\pi}_a$ factors
through the map $\varphi$, i.e. there exists a map $u: \widetilde{X}_a \ra D$ such that 
$\widetilde{\pi}_a = \varphi \circ u$. By the universal property of the fiber product there exists
a map $\delta: \widetilde{X}_a \ra Y_a$ into the fiber product $Y_a$ of $X_a$ with $D$ over $C$.
We denote by $Z = \im (\delta) \subset Y_a$ the image of the smooth irreducible curve $\widetilde{X}_a$.
Then $Z$ is irreducible too. Moreover, since $X_a$ is reduced and $\varphi$ is \'etale, the curve
$Y_a$ is also reduced, hence $Z$ is integral. The group $\Gamma$ acts on $Y_a$, hence permutes
its irreducible components. Since $\Gamma$ acts transitively on the fibers of $Y_a \ra X_a$, all
irreducible components are of the form $Z^\sigma$ for some $\sigma \in \Gamma$. We therefore obtain a factorization
$\widetilde{X}_a  \ra Z \ra X_a$. Since this composite map is birational, we deduce that
$\deg(Z/X_a) = 1$. Hence, since $\deg(Y_a/X_a) = d$, we conclude that
$$ Y_a = \bigcup_{\sigma \in \Gamma} Z^\sigma \qquad \text{and} \qquad Z^\sigma \not= Z^{\sigma'} \ \
\text{if} \ \sigma \not= \sigma'. $$
By Lemma \ref{irrcomspecur} the curve $Z$ is a spectral cover of degree $m$ over $D$ and by Lemma \ref{charAGamma}
we obtain that $a \in \AAA_\Gamma$.

\bigskip

Conversely, for $a \in \AAA_\Gamma$ the map $Z \ra X_a$ given by Lemma \ref{charAGamma} is birational. Hence
the normalization of $Z$ equals $\widetilde{X}_a$ and we obtain a factorization $\widetilde{X}_a \ra Z \ra D \ra C$,
which implies that $\Gamma \subset K_a$ by \cite{BL} Proposition 11.4.3.

\bigskip

Now we will prove the equivalence for more general characteristics $a \in \AAA_n$. We start with 
$a \in \AAA_n$ such that the spectral cover $X_a$ is irreducible, but not reduced. 
Let $X_a^{red}$ be the underlying reduced curve of $X_a$ and let $k$ be the 
multiplicity of $X_a^{red}$ in $X_a$. We put $n = k \cdot l$. By Lemma \ref{irrcomspecur} we have 
$X_a^{red} = X_{a_{red}}$ for some characteristic $a_{red} \in \AAA_l \subset \AAA_n$ and $a =
\Phi_k(a_{red})$ --- see section \ref{sectionspectralcovers}. Then by formula \eqref{defK} we have
$K_a = [k]^{-1} (K_{a_{red}})$. We introduce $\Gamma_{red} = [k](\Gamma) \subset \Pic^0(C)[l]$. Then
$\Gamma_{red}$ is a cyclic subgroup of order $d_{red} = \frac{d}{\mathrm{gcd}(k,d)}$. With this notation
we easily obtain the equivalence
$$ \Gamma \subset K_a \qquad \iff \qquad \Gamma_{red} \subset K_{a_{red}}. $$
We combine this equivalence  with the statement of the Theorem written for the integral characteristic $a_{red}$,
which was proved above:
$$ \Gamma_{red} \subset K_{a_{red}} \qquad \iff \qquad a_{red} \in \AAA_{\Gamma_{red}}. $$
Therefore it remains to show the following equivalence
$$ a_{red} \in \AAA_{\Gamma_{red}} \qquad \iff \qquad a = \Phi_k(a_{red}) \in \AAA_\Gamma. $$
In order to show this equivalence we introduce the subgroup  
$S = \ker (\Gamma \ra \Gamma_{red})$. By Galois theory there exists an intermediate cover
$D \ra \overline{D} \ra C$ with $\Aut(\overline{D}/C) = \Gamma_{red}$ and
$\Aut(D/\overline{D}) = S$.

\bigskip

Consider a characteristic $a_{red} \in \AAA_{\Gamma_{red}}$. By Lemma \ref{charAGamma}
applied to $a_{red} \in \AAA_{\Gamma_{red}}$ we obtain that the fiber product 
$Y_{a_{red}} = X_{a_{red}} \times_C \overline{D}$ decomposes as  
$\bigcup_{\sigma \in \Gamma_{red}} W^\sigma$, where $W$ is a spectral cover of degree $\frac{l}{d_{red}}$ 
over $\overline{D}$. Now,
observing that $Y_a = k (Y_{a_{red}}  \times_{\overline{D}} D)$ as divisors in $|N|$, we
can write
$$ Y_a = k \bigcup_{\sigma \in \Gamma_{red}}(W \times_{\overline{D}} D)^\sigma = 
\bigcup_{\sigma \in \Gamma} Z^\sigma, $$
where we have put $Z = \frac{k}{\mathrm{gcd}(k,d)} (W \times_{\overline{D}} D) \subset |N|$. Note that
$Z^\sigma = Z$ for $\sigma \in S$ and that $Z$ is a spectral cover of degree $\frac{n}{d}$. 
This proves that $a = \Phi_k(a_{red}) \in \AAA_\Gamma$.

\bigskip

Conversely, we consider a characteristic $a_{red} \in \AAA_l$ with $\Phi_k(a_{red}) \in \AAA_\Gamma$. We assume
that the spectral cover $X_{a_{red}}$ is integral. This assumption implies that the fiber product
$X_{a_{red}} \times_C D$ is reduced. Let $\cI$ denote an irreducible component of $X_{a_{red}} \times_C D$,
let $\mathrm{Stab}(\cI)$ denote its stabilizer, i.e.,
$$ \mathrm{Stab}(\cI) = \{ \sigma \in \Gamma \ | \ \cI^\sigma = \cI \}, $$
and let $\delta = | \mathrm{Stab}(\cI) |$ be the order. Since $\Gamma$ acts transitively on the fibers of
$X_{a_{red}} \times_C D \ra X_{a_{red}}$ we obtain the decomposition into 
irreducible components 
$$ X_{a_{red}} \times_C D  = \bigcup_{\sigma \in \Gamma / \mathrm{Stab}(\cI)} \cI^\sigma. $$
Let us denote by $s$ the global section over $|N|$ with $\mathrm{Zeros}(s) = \cI$. Then the
spectral cover $Y_a = k (X_{a_{red}} \times_C D)$ is the zero set of the section
$$  \prod_{\sigma \in \Gamma / \mathrm{Stab}(\cI)} (s^\sigma )^k, $$
which has $k \frac{d}{\delta}$ irreducible factors of the same degree. The
assumption $\Phi_k(a_{red}) \in \AAA_\Gamma$ implies that this product can be written as
a product of $d$ factors of the same degree, hence $k \frac{d}{\delta}$ is divisible by $d$, i.e., 
$\delta$ divides $k$, so $\delta$ divides $\mathrm{gcd}(k,d)$. Since $\delta = | \mathrm{Stab}(\cI) |$ and
$\mathrm{gcd}(k,d) = |S|$, we conclude that $\mathrm{Stab}(\cI) \subset S$. We then introduce the
section 
$$t = \prod_{\sigma \in S / \mathrm{Stab}(\cI)} s^\sigma.$$ 
Since $t$ is $S$-invariant, its
zero divisor descends as a spectral cover $W$ over $\overline{D} = D / S$. Moreover we have the
equality
$$ \prod_{\sigma  \in \Gamma / \mathrm{Stab}(\cI)} s^\sigma = \prod_{\tau  \in \Gamma_{red} = \Gamma/ S} t^\tau, $$
which proves that $Y_{a_{red}} =  X_{a_{red}} \times_C \overline{D}$ splits into $d_{red}$ spectral
covers $W^\tau$ for $\tau \in \Gamma_{red}$, and we conclude by Lemma \ref{charAGamma} that 
$a_{red} \in \AAA_{\Gamma_{red}}$.

\bigskip

Finally, we will show the equivalence for a characteristic $a \in \AAA_{n_1,n_2} \subset \AAA_n$, i.e. the
spectral cover $X_a$ equals the union $X_{a_1} \cup X_{a_2}$ for two spectral covers $X_{a_i}$ with 
$a_i \in \AAA_{n_i}$, which
we assume to be irreducible. Then by \eqref{defK} we have
$$ \Gamma \subset K_a \qquad \iff \qquad \Gamma \subset K_{a_1} \ \text{and} \ \Gamma \subset K_{a_2}. $$
On the other hand since the curves $X_{a_i}$ are irreducible, we can apply what we have proved above, i.e., for
$i= 1,2$
$$ \Gamma \subset K_{a_i}  \qquad \iff \qquad a_i \in \AAA^{n_i}_\Gamma. $$
Note that $\Gamma \subset \Pic^0(C)[n_i]$ for $i= 1,2$. Here $\AAA^{n_i}_\Gamma$ denotes the corresponding
subspace of $\AAA_{n_i}$. Hence it remains to show that
$$ a_1 \in \AAA^{n_1}_\Gamma \ \text{and} \  a_2 \in \AAA^{n_2}_\Gamma  \qquad \iff \qquad
a = \Phi_{n_1,n_2}(a_1,a_2) \in \AAA_\Gamma.$$
The implication $\Rightarrow$ is trivial. In order to show the implication $\Leftarrow$ we note that Lemma
\ref{charAGamma} gives the decomposition $Y_a = \bigcup_{\sigma \in \Gamma} Z^\sigma$ for some spectral cover
$Z$. We then put $Z_i = Z \cap Y_{a_i}$, which gives the desired decomposition for the fiber product
$Y_{a_i}$.

\bigskip

Now the statement follows for arbitrary characteristic $a \in \AAA_n$ by induction on the
number of irreducible components of $X_a$.

\end{proof}
 
\section{Moduli space of semi-stable Higgs bundles}

In this section we describe how the Prym variety $\Prym(X_a/C)$ acts on the fiber $h^{-1}(a)$ of the
$\SL_n$-Hitchin fibration. First we recall the semi-stability condition \ref{definitionsemistability}
for coherent sheaves over the spectral cover $X_a$. The following result generalizes
Proposition 3.6 of \cite{BNR} to any spectral cover $X_a$.

\begin{prop} 
Let $a \in \AAA_n^0$ be any characteristic.
The fiber $h^{-1}(a)$ of the $\SL_n$-Hitchin fibration \eqref{Hitchinfibration} equals the moduli space of
semi-stable sheaves $\cE$ over the spectral curve
$X_a$ of rank $1$ and with fixed determinant $\det (\pi_* \cE) = \Delta$. 
\end{prop}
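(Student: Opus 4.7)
The plan is to establish the spectral correspondence between rank $1$ torsion-free sheaves on $X_a$ and trace-free Higgs bundles on $C$ with characteristic $a$, generalizing the proof of \cite{BNR} Proposition 3.6 to arbitrary (possibly reducible and non-reduced) spectral covers $X_a$. The essential gadget is the tautological section $t \in H^0(|M|, \pi^* M)$, whose restriction to $X_a$ satisfies $s_a(t) = 0$.

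First I would set up the correspondence in one direction: given a coherent $\cO_{X_a}$-module $\cE$, multiplication by $t$ induces an $\cO_C$-linear morphism $\mu_t : \pi_* \cE \ra \pi_* \cE \otimes M$, and since $s_a(t) = 0$ on $X_a$ we obtain $s_a(\mu_t) = 0$. Interpreting $\mu_t$ as a Higgs field $\phi$ on $E := \pi_* \cE$, the relation $s_a(\phi) = 0$ forces the characteristic polynomial of $\phi$ to be $s_a$, so the Hitchin image of $(E, \phi)$ is $a$; the condition $a_1 = 0$ (i.e. $a \in \AAA_n^0$) is then equivalent to $\tr(\phi) = 0$. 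Conversely, given $(E, \phi)$ with $s_a(\phi) = 0$, the assignment $t \cdot e = \phi(e)$ extends the $\cO_C$-module structure on $E$ to a $\pi_* \cO_{|M|}/(s_a) = \pi_* \cO_{X_a}$-module structure, and by the equivalence between $\cO_{X_a}$-modules and $\pi_* \cO_{X_a}$-modules on $C$ this gives a sheaf $\cE$ on $X_a$ with $\pi_* \cE = E$. These two constructions are mutually inverse.

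Next I would translate the numerical conditions. Using formula \eqref{degreedirectimage}, a sheaf $\cE$ on $X_a$ has rank $1$ in the sense of Definition \ref{definitionsemistability} if and only if $\pi_* \cE$ has generic rank $n$, i.e.\ $E$ is a rank $n$ vector bundle on $C$ (when $\cE$ is torsion-free). The fixed determinant condition $\det(\pi_* \cE) = \Delta$ is built into the $\SL_n$-moduli problem. The main point to check here is that under the correspondence, torsion-free rank $1$ sheaves on $X_a$ correspond exactly to those $(E, \phi)$ for which $E$ is locally free; torsion sub-sheaves of $\cE$ would push forward to $\phi$-invariant subsheaves of lower generic rank, which are excluded by the semi-stability of $(E, \phi)$ for $a \in \AAA_n^0$ (or can be eliminated by a direct argument using the local structure theorem, Theorem \ref{strthmnonreduced}).

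The real work, and the step I expect to be the main obstacle, is the equivalence of semi-stability. A $\phi$-invariant $\cO_C$-subsheaf $F \subset E$ is precisely a $\pi_*\cO_{X_a}$-submodule, hence corresponds to an $\cO_{X_a}$-subsheaf $\cF \subset \cE$ with $F = \pi_* \cF$. Using \eqref{degreedirectimage} one has $n \rk(\cF) = \rk(F)$ and $\deg(\cF) + \rk(\cF)\chi(\cO_{X_a}) = \deg(F) + \rk(F)\chi(\cO_C)$, so the Simpson slope $\mu(\cF) = \deg(\cF)/\rk(\cF)$ on $X_a$ differs from the Higgs slope $\mu(F) = \deg(F)/\rk(F)$ on $C$ by a constant depending only on $n$, $\chi(\cO_{X_a})$ and $\chi(\cO_C)$. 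Therefore the inequality $\mu(\cF) \leq \mu(\cE)$ (resp.\ $<$) holds if and only if $\mu(F) \leq \mu(E)$ (resp.\ $<$), which is exactly the comparison of semi-stability (resp.\ stability) for $\cE$ as a pure sheaf on $X_a$ in the sense of Simpson and for $(E, \phi)$ as a Higgs bundle. Combined with the identification of the $\SL_n$-data $\tr(\phi) = 0$ and $\det(E) = \Delta$ established above, this yields the identification of $h^{-1}(a)$ with the Simpson moduli space of semi-stable rank $1$ sheaves on $X_a$ with fixed push-forward determinant $\Delta$.
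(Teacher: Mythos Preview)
Your argument has a genuine gap at the step where you assert that ``the relation $s_a(\phi) = 0$ forces the characteristic polynomial of $\phi$ to be $s_a$''. This implication is valid when $s_a$ is irreducible (the integral case of \cite{BNR}), because then the minimal polynomial of $\phi$ must equal $s_a$ and hence coincides with the characteristic polynomial by degree. But for a non-integral spectral curve it fails: if $X_a = X_1 \cup X_2$ with $\deg(X_i/C)=1$ and $s_a = (t-\alpha_1)(t-\alpha_2)$, the sheaf $\cE = i_{1*}\cO_{X_1}^{\oplus 2}$ is pure of dimension one, has rank $1$ in the sense of Definition~\ref{definitionsemistability}, satisfies $s_a(\phi)=0$, yet the characteristic polynomial of $\phi$ on $\pi_*\cE \cong \cO_C^{\oplus 2}$ is $(t-\alpha_1)^2 \neq s_a$. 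So knowing $s_a(\phi)=0$ and $\rk E = n$ does not pin down the characteristic.

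This is precisely the point the paper isolates as requiring new work beyond \cite{BNR} and \cite{Simpson2}. The paper inserts a separate Lemma showing that for a torsion-free sheaf $\cE$ on $X_a$ the characteristic of the associated Higgs bundle is $a$, and proves it by reducing to the integral case: for an irreducible non-reduced $X_a$ with $X_a^{red}$ of multiplicity $m$ one shows $\mathrm{Char}(\cE) = \mathrm{Char}(\cE^{red})^m$ by the filtration argument of Lemma~\ref{Nmnr}, and for reducible $X_a$ one shows $\mathrm{Char}(\cE) = \prod_i \mathrm{Char}(\cE_i)$ along the lines of Lemma~\ref{Nmred}. Your treatment of the semi-stability comparison via \eqref{degreedirectimage} is fine and essentially what \cite{Simpson2} gives, but you need to supply this missing computation of the characteristic (and be explicit about which rank-$1$ torsion-free sheaves are in play) before the correspondence lands in $h^{-1}(a)$.
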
 

\begin{proof}
In the case $X_a$ integral this is exactly \cite{BNR} Proposition 3.6 except for the assertion that the
bijective correspondence between Higgs bundles $(E,\phi)$ over $C$ with given characteristic $a$  and
torsion free rank-$1$ sheaves $\cE$ over $X_a$ is compatible with both semi-stability conditions. 
But this compatibility is shown in \cite{Simpson2} Corollary 6.9 --- note that Simpson works in the special case 
when $M$ is the canonical bundle of $C$ but his proof remains valid for an arbitray line bundle $M$.

\bigskip

In order to show the statement for an arbitrary spectral cover $X_a$ we observe that the constructions and arguments of \cite{BNR} and \cite{Simpson2} remain valid for an arbitrary characteristic $a$. However the next statement needs
a new proof.

\begin{lem}
Let $\cE$ be a torsion-free sheaf over $X_a$. Then the characteristic of its associated Higgs bundle $(E,\phi)$
equals $a$.
\end{lem}

\begin{proof}
If $X_a$ is integral, this is shown in \cite{BNR} Proposition 3.6 using the irreducibility of the polynomial $s_a$ defined
in \eqref{polynomiala}. In the general case we first show the statement when $X_a$ is irreducible but non-reduced and secondly when $X_a$ is reducible. Given a sheaf $\cE$ over $X_a$ we denote by $\mathrm{Char}(\cE)$ the characteristic
polynomial of its associated Higgs field. 

Suppose first that $X_a$ is irreducible and that its underlying reduced curve $X_a^{red}$ has multiplicity $m$. Denote
by $\cE^{red}$ the restriction of $\cE$ to $X_a^{red}$. Then one shows exactly along the lines of Lemma \ref{Nmred} that
$\mathrm{Char}(\cE) = \mathrm{Char}(\cE^{red})^m$.

Suppose now that $X_a$ has several irreducible components $X_i$ and denote by $\cE_i$ the restriction of $\cE$ to
$X_i$. Then one shows exactly along the lines of Lemma \ref{Nmnr} that  
$\mathrm{Char}(\cE) =  \prod_i \mathrm{Char}(\cE_i)$.

Finally decomposing $X_a$ into irreducible components and taking the underlying reduced curves, one reduces the
statement to the integral case, which is already shown.
\end{proof}

This completes the proof of the proposition.
\end{proof}

Considering $h^{-1}(a)$ as the moduli space of semi-stable sheaves over $X_a$ we let $\Prym(X_a/C)$ act through
tensor product. By Proposition \ref{detpi} it is clear that the determinant is invariant under this action, and since
$\deg(\cE \otimes \cL) = \deg (\cE)$ for any $\cL \in \Prym(X_a/C)$ we see that semi-stability is preserved.

\begin{rem}
{\em In a forthcoming paper we will describe in detail the case $a=0$, i.e. the action of the
Prym variety $\Prym(X_0/C)$, which equals in this case $n^{2g}$ copies of a vector space (see e.g. \cite{Drezet} section 3.3), on the nilpotent cone of the Higgs moduli space.
}
\end{rem}

\begin{rem}
{\em We note that the  description of the fiber $h^{-1}(a)$ above was already stated in Proposition 2.1 in \cite{Schaub}.
Unfortunately the proof of the Proposition 2.1 in \cite{Schaub} contains an inaccuracy, which is a consequence of the 
author's different definition of a torsion-free sheaf of rank $1$ over an arbitrary spectral cover. }
\end{rem}

\section{Application to Topological Mirror Symmetry}

In this final section we assume that our line bundle $M$ equals the canonical bundle $K_C$ and that 
$(n,{\rm deg} (\Delta))=1$. These assumptions imply that $\calM$ is a non-singular quasi-projective variety of dimension $$\dim(\calM)=(n^2-1)(2g-2).$$ The dimension of the affine space $\AAA_n^0$ is $$\dim(\AAA_n^0)=(n^2-1)(g-1)$$ and consequently the Hitchin map $h$ is of relative dimension $(n^2-1)(g-1)$.  

Let $\Gamma\subset \Pic^0(C)[n]$ be 
a cyclic subgroup of order $d$ which must divide $n$. Then we have
\begin{lem} \label{dim} $\dim(\AAA^0_\Gamma)=(n^2/d-1)(g-1)$
\end{lem}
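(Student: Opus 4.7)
My plan is to use the map $\Phi_\Gamma : \AAA_m^\Gamma(D,N) \to \AAA_n^0$ described in section \ref{sectcyclicGalois}, where $m = n/d$. Since its fibers are finite (as noted just after the definition of $\Phi_\Gamma$), we have $\dim \AAA^0_\Gamma = \dim \AAA^\Gamma_m(D,N)$, so the whole problem reduces to a dimension count on $D$.

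The first key observation is that $N = \varphi^* K_C$ equals the canonical bundle $K_D$ of $D$. Indeed, $\varphi$ is étale, so Riemann--Hurwitz gives $2g_D - 2 = d(2g - 2)$ and therefore $\deg N = d(2g-2) = 2g_D - 2$; combined with $K_D = \varphi^* K_C$ (étale pullback of canonical), this identifies $N$ with $K_D$. Next, since $\varphi_* \cO_D = \bigoplus_{L \in \Gamma} L$ with the $\Gamma$-invariant summand being $L = \cO_C$, the projection formula gives $\varphi_* N = \bigoplus_{L \in \Gamma}(M \otimes L)$, and hence
\begin{equation*}
H^0(D,N) = \bigoplus_{L \in \Gamma} H^0(C, M \otimes L), \qquad H^0(D,N)_{inv} = H^0(C,M).
\end{equation*}
So $\dim H^0(D,N)_{var} = g_D - g = (d(g-1)+1) - g = (d-1)(g-1)$.

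For $j \geq 2$ we have $N^j = K_D^j$ with $\deg K_D^j > 2g_D - 2$, so $H^1 = 0$ and Riemann--Roch yields
\begin{equation*}
\dim H^0(D, N^j) = (2j-1)(g_D - 1) = (2j - 1)\, d(g-1).
\end{equation*}
Summing gives
\begin{equation*}
\dim \AAA^\Gamma_m(D,N) = (d-1)(g-1) + d(g-1)\sum_{j=2}^{m}(2j-1) = (g-1)\bigl[(d-1) + d(m^2 - 1)\bigr],
\end{equation*}
where I used $\sum_{j=2}^{m}(2j-1) = m^2 - 1$. This simplifies to $(g-1)(dm^2 - 1) = (n^2/d - 1)(g-1)$ since $dm = n$, which is the claimed formula.

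The only nontrivial step is computing $\dim H^0(D,N)_{var}$; the main obstacle (if any) is just verifying that $N = K_D$ so that the variant subspace has the clean dimension $(d-1)(g-1)$. Once that is established, the other contributions $\dim H^0(D,N^j)$ for $j \geq 2$ follow from Riemann--Roch without subtlety, since $N^j$ is of sufficiently high degree that $\Gamma$-decomposing is unnecessary for the dimension count.
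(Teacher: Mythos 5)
Your proof is correct and follows essentially the same route as the paper: reduce to $\dim \AAA_m^\Gamma(D,N)$ via the finiteness of $\Phi_\Gamma$, identify $N=\varphi^*K_C$ with $K_D$, and count dimensions by Riemann--Roch on $D$ (the paper simply writes this as $\dim\AAA_m(D,K_D)-\dim H^0(D,K_D)^\Gamma = m^2(g_D-1)+1-g$, which is your term-by-term sum reorganized). No gaps.
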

\begin{proof} In \S\ref{sectcyclicGalois} we associated an \'etale Galois cover $\phi:D\to C$ to $\Gamma$ with Galois group $\Gamma$. Thus the pull back $N=\phi^*(K_C)=K_D$. There we also constructed $\AAA^0_\Gamma$ as the image of $\AAA^\Gamma_m$ by the finite map $\Phi_\Gamma$. 
 Thus we have \begin{multline*}\dim(\AAA^0_\Gamma)=\dim(\AAA^\Gamma_m)=
\dim(\AAA_m)-\dim(H^0(D,K_D)^\Gamma)=\dim(\AAA_m)-\dim(H^0(C,K_C))=\\ =\frac{n^2}{d^2}(dg-d)+1-g=({n^2}/d-1)(g-1).\end{multline*}
\end{proof}
Now we can prove Theorem~\ref{variant}
\begin{proof} With the notation \eqref{aendo} we introduce  
$$\AAA^0_{ne}:=\AAA_n^0\setminus \AAA_{endo}$$ 
the locus of those characteristics for which $\Prym(X_a/C)$ is connected. Further denote 
$$\calM_{ne}:=h^{-1}(\AAA^0_{ne} ).$$ 
First we argue that $\Pic^0(C)[n]$ acts
trivially on $H^*(\calM_{ne})$.   Let $a\in \AAA^0_{ne}$. Since by \eqref{Nmpullback} we have 
$\pi^* \Pic^0(C)[n] \subset \Prym(X_a/C)$ and since by assumption on the characteristic $a$ the Prym
variety $\Prym(X_a/C)$ is connected, the finite group $\Pic^0(C)[n]$ acts trivially on the cohomology 
of every fiber $h_{ne}^{-1}(a)$ of $$h_{ne}:=h|_{\calM_{ne}}: \calM_{ne}\to \AAA^0_{ne}.$$
Therefore by Lemma~\ref{fibretriv} below it follows that it acts trivially on $H^*(\calM_{ne};\QQ)$ as well. 

\begin{lem}\label{fibretriv} Let $f:X\to Y$ be a proper map between locally compact Hausdorff topological spaces. 
Let the finite group $\G$ act on $X$ along the fibers of $f$. Assume $\G$ acts trivially on $H^*(X_a;\QQ)$ for every $a\in Y$, where $X_a=f^{-1}(a)$. Then the action of $\G$ on $H^*(X;\QQ)$ is trivial.
\end{lem}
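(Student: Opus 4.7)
The plan is to reduce the statement about $H^*(X;\QQ)$ to statements on fibers via the Leray spectral sequence of $f$, using the properness of $f$ to control stalks.

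First I would use proper base change: since $f$ is proper and $X, Y$ are locally compact Hausdorff, for every $a\in Y$ and $q\geq 0$ there is a canonical isomorphism $(R^q f_*\QQ_X)_a \cong H^q(X_a;\QQ)$. The action of $\G$ on $X$ is along the fibers of $f$, meaning $f\circ g = f$ for every $g\in\G$; consequently $g$ induces an automorphism of $Rf_*\QQ_X$ as a complex of sheaves on $Y$, and hence of each cohomology sheaf $R^q f_*\QQ_X$. The induced action on the stalk at $a$ is precisely the action of $g$ on $H^q(X_a;\QQ)$, which by hypothesis is the identity. Since a morphism of sheaves that is the identity on every stalk is itself the identity, $\G$ acts trivially on each $R^q f_*\QQ_X$.

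Next, the Leray spectral sequence
\[
E_2^{p,q} = H^p(Y; R^q f_*\QQ_X) \Longrightarrow H^{p+q}(X;\QQ)
\]
is $\G$-equivariant (because $g\in\G$ is a morphism of spaces over $Y$, so everything is natural). Since $\G$ acts trivially on each sheaf $R^q f_*\QQ_X$, it acts trivially on $E_2$, hence on every later page, and in particular on $E_\infty$. Therefore the induced action of $\G$ on the associated graded of the Leray filtration $F^\bullet H^n(X;\QQ)$ is trivial.

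The remaining step, which I expect to be the subtle one, is to upgrade triviality on the associated graded to triviality on $H^n(X;\QQ)$ itself. For each $g\in\G$, the operator $g-\mathrm{id}$ strictly increases the Leray filtration, so it is nilpotent on the finite-length filtration of $H^n(X;\QQ)$. On the other hand $g$ has finite order dividing $|\G|$, so it satisfies $T^{|\G|}-1=0$; over $\QQ$ this polynomial is separable, hence $g$ is diagonalizable. A diagonalizable nilpotent endomorphism of $g-\mathrm{id}$ forces $g = \mathrm{id}$ on $H^n(X;\QQ)$. Equivalently, by Maschke's theorem $\QQ[\G]$ is semisimple, so the filtration splits $\G$-equivariantly and trivial associated graded gives a trivial module. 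Either way, $\G$ acts trivially on $H^*(X;\QQ)$, which completes the proof.
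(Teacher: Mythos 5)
Your proof is correct, but it takes a genuinely different route from the paper's. The paper never analyzes the Leray spectral sequence of $f$ itself: it forms the quotient $\pi:X\to X/\G$ and the induced proper map $g:X/\G\to Y$, and studies the natural morphism of sheaves $R^k g_*\underline{\QQ}_{X/\G}\to R^k f_*\underline{\QQ}_X$. Proper base change identifies its stalks with the maps $H^k(X_a/\G;\QQ)\to H^k(X_a;\QQ)$, and Macdonald's transfer isomorphism $H^k(X_a/\G;\QQ)\cong H^k(X_a;\QQ)^{\G}$ together with the hypothesis shows these are isomorphisms; hence the sheaf map is an isomorphism, and one concludes $H^*(X/\G;\QQ)\cong H^*(X;\QQ)$, which combined with $H^*(X/\G;\QQ)\cong H^*(X;\QQ)^{\G}$ forces the action to be trivial. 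That route deliberately sidesteps the extension problem: a map of first-quadrant spectral sequences that is an isomorphism on $E_2$ induces an isomorphism on abutments, so no splitting of the Leray filtration is ever needed. You keep everything on $X$ and therefore must confront the extension problem at the end, and your resolution is sound: $g-\mathrm{id}$ is nilpotent because the Leray filtration on $H^n(X;\QQ)$ has finite length, while $g$ is semisimple since its minimal polynomial divides the separable polynomial $T^{|\G|}-1$ (equivalently, $\QQ[\G]$ is semisimple by Maschke and the filtration splits equivariantly), and a semisimple unipotent operator is the identity. What your version buys is self-containedness, needing only proper base change, the equivariance of the Leray spectral sequence, and linear algebra in characteristic zero, with no recourse to the quotient space or to Macdonald's theorem; what the paper's version buys is the stronger intermediate statement $H^*(X/\G;\QQ)\cong H^*(X;\QQ)$ and the complete avoidance of any filtration argument. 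One small point worth making explicit in your write-up is the naturality of the proper base change isomorphism, which is what guarantees that the stalkwise action of $g\in\G$ on $R^qf_*\underline{\QQ}_X$ really is the action on $H^q(X_a;\QQ)$; this is standard and does not affect correctness.
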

\begin{proof} Let $\underline{\QQ}_X$ be the constant sheaf on $X$. Let $\pi:X\to X/\G$ be the quotient map, $\underline{\QQ}_{X/\G}$ the constant sheaf on $X/\G$ and $g:X/\G\to Y$ defined by the property $g\circ \pi=f$. Then we have a morphism of sheaves $$m:\underline{\QQ}_{X/\G} \to \pi_*\underline{\QQ}_X.$$ This induces a morphism of sheaves $$R^km:R^k g_*\underline{\QQ}_{X/\G}\to R^kg_*\pi_*\underline{\QQ}_X.$$ As $\pi$ is finite $R^kg_*\pi_*\underline{\QQ}_X\cong R^kf_*\underline{\QQ}_X$. Thus we get a morphism of sheaves 
$$R^km:R^k g_*\underline{\QQ}_{X/\G}\to R^kf_*\underline{\QQ}_X.$$ Firs we understand the induced map on any stalk. By proper base change \cite[Theorem 6.2]{iversen} the stalks are just the $k$th cohomology groups of the fibres. Thus our morphism becomes $$R^km_s: H^k(X_s/\G;\QQ)\to H^k(X_s;\QQ).$$ We have  the isomorphism $$H^k(X_s/\G;\QQ)\cong H^k(X_s;\QQ)^\G$$ of \cite[(1.2)]{macdonald} and by assumption  $$H^k(X_s;\QQ)^\G\cong H^k(X_s;\QQ).$$ Thus $R^km_s$ is an isomorphism for all $s\in Y$  so \cite[Proposition 1.1]{hartshorne} implies that  $R^km$ is an isomorphism of sheaves. By \cite[(1.2)]{macdonald} the theorem follows.   \end{proof}
  
  Thus we have that $\Pic^0(C)[n]$ acts trivially on $H^*(\calM_{ne};\QQ)$.
Note that by Lemma~\ref{dim} and by the observation that $\AAA_{\Gamma_2}^0 \subset \AAA_{\Gamma_1}^0$ if
$\Gamma_1 \subset \Gamma_2$, the codimension of the endoscopic locus $\AAA_{endo}$ is given by 
$$c_n:=(n^2-1)(g-1)-({n^2}/p_n-1)(g-1)=n^2(1-1/p_n)(g-1),$$ 
where $p_n$ is the smallest prime divisor of $n$.  By studying the cohomology long exact sequence of the pair $(\calM,\calM_{ne})$ we see that the restriction map $$H^k(\calM;\QQ)\to H^k(\calM_{ne};\QQ)$$ is an isomorphism for $k\leq 2c_n-2$ and is an injection for $k=2c_n-1$.Thus we could immediately deduce that $\Pic^0(C)[n]$ acts trivially on $H^k(\calM;\QQ)$ for \begin{eqnarray} k< 2c_n.\label{elem}
 \end{eqnarray} Further, we note that  any generic $(c_n-1)$-dimensional subvariety $\Lambda^{c_n-1}$  of $\AAA^0$  will be disjoint from $\cup \AAA^0_\Gamma$ thus a  cohomology class $\eta 
\in H^*(\calM;\QQ)$ which is not invariant under $\Pic^0(C)[n]$  (we call such classes {\em variant}) must satisfy $$\eta|_{h^{-1}(
\Lambda^{c_n-1})}
=0$$ and so by \cite[Theorem 1.4.8]{dCHM} a variant class of degree $i$ must have perversity at most ${i-c_n}$. By  
\cite[Theorem 1.4.12] {dCHM} this already implies $i\geq 2c_n$ (which as we noted above in \eqref{elem}  also follows from the cohomology  long exact sequence of $(\calM,\calM_{ne})$). Finally a variant class $$\eta\in H^{2c_n}_{c_n}(\calM;\QQ)$$  by Relative Hard Lefschetz -with the ample $\Pic^0(C)[n]$-invariant class $\alpha \in H^2(\calM;\QQ)$-  gives a variant class $$\alpha^{\dim(\calM)/2-c_n}\eta \in H^{\dim(\calM)}(\calM;\QQ)$$ which contradicts \cite[Theorem 1]{GHS}, which proves that there are no variant classes in the middle cohomology $H^{\dim(\calM)}(\calM;\QQ)$. Here we used the fact $\alpha\in H^2(\calM;\QQ)$ is always $\Pic^0(C)[n]$-invariant, for example because of \eqref{elem} and 
 $2<2c_n$ when $g>1$.

  Thus the smallest possible degree 
a variant class may have is $2c_n+1$ proving Theorem~\ref{variant}.
\end{proof}

\begin{rem} {\em Theorem~\ref{variant}  is a consequence of the topological mirror symmetry conjecture
\cite[Conjecture 5.1]{hausel-thaddeus}. It can be deduced by calculating all possible fermionic shifts $F(\gamma)$ for $\gamma\in \Pic^0(C)[n]$ on the RHS of \cite[Conjecture 5.1]{hausel-thaddeus} and noting that due to the presence of the gerbe $\hat{B}^d$  the degree $0$ part of the cohomology of the twisted sector $\calM^\gamma/\Pic^0(C)[n]$ does not contribute to the stringy cohomology of $\calM/\Pic^0(C)[n]$. In fact, the inequality in Theorem~\ref{variant} is sharp, when $n$ is a prime, because \cite[Proposition 10.1]{hausel-thaddeus} implies that there is variant cohomology in degree 
$2c_n+1$. The close connection between the topological mirror symmetry 
conjecture \cite[Conjecture 5.1]{hausel-thaddeus} and the group of connected components of  Prym varieties of spectral covers unravelled in this section is an indication of the deep analogies between S-duality considerations in physics as in Kapustin-Witten's \cite{kapustin-witten} and Ng\^{o}'s \cite{ngo1,ngo2} geometric approach to the fundamental lemma. For more discussion on this see \cite{hausel-survey}.}
\end{rem}

Finally we can prove Corollary~\ref{hnmain} 
\begin{proof} As the Hitchin map $h:\calM\to \AAA_n^0$ is proper by \cite{nitsure}, and the 
$\CC^\times$-action on $\calM$ covers a $\CC^\times$ action on $\AAA_n^0$ with
positive weights, and so with a unique fixed point $0\in \AAA^0$. It follows that for 
every $z\in \calM$ the limit $\lim_{\lambda \to 0} \lambda z$ exists in the compact fixed point set $\calM^{\CC^\times}$. Using the compactification $\overline{\calM}$ as defined in such a situation in \cite[\S 11]{Simpson3} and 
\cite{hausel-compact} we can 
apply the following cohomological techniques. They were studied  in the compact case by Kirwan in \cite{kirwan},
(who commented on the non-compact case in \cite[\S 9]{kirwan})
 and in the non-compact case by Nakajima in \cite[\S 5.1]{nakajima}.  

 For a connected component $F_\alpha$ of the fixed point set $$\calM^{\CC^\times}=\bigsqcup_{\alpha\in I} F_\alpha$$ we can then define $$U_\alpha:=\{z\in \calM | \lim_{\lambda \to 0} \lambda z \in F_\alpha\}$$ giving the Bialinicki-Birula decomposition $$\calM=\bigsqcup_{\alpha\in I} U_\alpha.$$ One can now define
a partial ordering on the index set $I$, by $\alpha\leq \beta$ when $U_\beta \cap \overline{U}_\alpha \neq\emptyset$. By induction on this ordering one can prove \begin{eqnarray}\label{decomp}H^*(\calM;\QQ)\cong \bigoplus_{\alpha \in I} H^{*-{\rm codim}(U_\alpha)}(U_\alpha;\QQ).\end{eqnarray} If $0$ denotes the minimal element in $I$, then we can see that $U_0=T^*\calN$ contains those Higgs bundles where the underlying bundle is stable. It follows from  \eqref{decomp} (or rather via the perfectness of the Bialinicki-Birula stratification used to prove \eqref{decomp}) that the restriction map $$H^*(\calM;\QQ)\to H^*(U_0;\QQ)\cong H^*(\calN;\QQ)$$ 
is surjective. 

Thus in particular Theorem~\ref{variant} implies that $\Pic^0(C)[n]$ acts trivially on the cohomology $H^k(\calN;\QQ)$ for $k\leq 2c_n=2n^2(1-1/{p_n})(g-1)$, where $p_n$ is the smallest prime divisor of $n$. But $$
2c_n>(n^2-1)(g-1)$$ thus the cohomology $H^*(\calN;\QQ)$ is $\Pic^0(C)[n]$-invariant up to and including the middle degree  and  by Poincar\'e duality everywhere.     The result follows.

As $2c_n-1\geq (n^2-1)(g-1)$ (with possible equality when $g=2$ and $p_n=2$) we note that the more elementary estimate \eqref{elem}  already implies the result. 
\end{proof}

\begin{rem} {\em When $n=2$  Corollary~\ref{hnmain} follows from  \cite[Theorem 1]{newstead} and for general $n$ it appeared as \cite[Theorem 1]{harder-narasimhan} of Harder--Narasimhan. In \cite{harder-narasimhan} it was proved by an arithmetic study of $\calN$. Later it was reproved with the gauge theoretical
approach of Atiyah--Bott in \cite[Proposition 9.7]{atiyah-bott}. Our proof above
is by identifying the characteristics for which the  Prym variety is connected, which excludes the nilpotent cone, where $\calN$ is located, and yields Corollary~\ref{hnmain} without focusing on $H^*(\calN;\QQ)$. This approach
thus can be considered as an example of the abelianization philosophy of 
Atiyah--Hitchin \cite[\S 6.3]{atiyah}, by studying the abelian problem of  Prym varieties of spectral curves we deduced results on the  moduli space $\calN$ of vector bundles of rank $n>1$.

}
\end{rem}

\end{document}